\def\Rcal{\mathcal{R}}
\def\Ocal{\mathcal{O}}
\def\bc{\boldsymbol{\mathfrak{B}}}
\def\c{\mathbb{C}}
\def\r{\mathbb{R}}
\def\n{\mathbb{N}}
\def\z{\mathbb{Z}}
\def\b{\mathbb{B}}
\def\d{\mathbb{D}}
\def\s{\mathbb{S}}
\def\mgot{\mathfrak{m}}
\def\pgot{\mathfrak{p}}
\def\cgot{\mathfrak{c}}
\def\Fgot{\mathfrak{F}}
\def\agot{\mathfrak{a}}
\def\oB{\overline{\mathscr{T}}}
\def\dist{\mathrm{dist}}
\titleformat{\subsection}[runin]
{\bfseries} {\thesubsection{.}}{0.15cm}{}[.]
\titleformat{\subsubsection}[runin]
{\em}{\thesubsubsection{.}}{0.15cm}{}[.]
\newtheorem{theorem}{Theorem}[section]
\newtheorem{claim}[theorem]{Claim}
\newtheorem{lemma}[theorem]{Lemma}
\newtheorem{corollary}[theorem]{Corollary}
\newtheorem{definition}[theorem]{Definition}
\newtheorem{question}[theorem]{Question}
\newtheorem*{question*}{Question}
\theoremstyle{definition}
\numberwithin{equation}{section}
\numberwithin{figure}{section}
\begin{document}
\fancyhead[LO]{Complete complex hypersurfaces in the ball}
\fancyhead[RE]{A.\ Alarc\'on, J.\ Globevnik, and F.\ J.\ L\'opez}
\fancyhead[RO,LE]{\thepage}

\thispagestyle{empty}

%% Title
\vspace*{7mm}
\begin{center}
{\bf \LARGE A construction of complete complex hypersurfaces in the ball with control on the topology}
\vspace*{5mm}

%% Authors
{\large\bf A.\ Alarc\'on, J.\ Globevnik, and F.\ J.\ L\'opez}
\end{center}

%% Addresses and finantial support
%\footnote[0]{\vspace*{-0.4cm}
%}
%% Abstract, keywords, and MSC

\vspace*{7mm}

\begin{quote}
{\small
\noindent {\bf Abstract}\hspace*{0.1cm}
Given a 
closed complex hypersurface $Z\subset \c^{N+1}$ $(N\in\n)$ and a compact subset $K\subset Z$, we prove the existence of a pseudoconvex Runge domain $D$ in $Z$ such that $K\subset D$ and there is a complete proper holomorphic embedding from $D$ into the unit ball of $\c^{N+1}$.
For $N=1$, we derive the existence of complete properly embedded complex curves in the unit ball of $\c^2$, with arbitrarily prescribed finite topology. In particular, there exist complete proper holomorphic embeddings of the unit disc $\d\subset \c$ into the unit ball of $\c^2$. 

These are the first known examples of complete bounded embedded complex hypersurfaces in $\c^{N+1}$ with any control on the topology. 

%\vspace*{0.1cm}
%\noindent{\bf Keywords}\hspace*{0.1cm} Complex curves.

%\vspace*{0.1cm}

%\noindent{\bf MSC (2010)}\hspace*{0.1cm}
}
\end{quote}

%%%%%%%%%%
%%%%%%%%%%
%%%%%%%%%%
%%%%%%%%%% INTRODUCTION
%%%%%%%%%%
%%%%%%%%%%

\section{Introduction and main results}\label{sec:intro}

Let $\d$ denote the unit disc in $\c$ and, for $N\in\n$, denote by $\bc_{N+1}$ the unit ball in $\c^{N+1}$.

In 1977 P.\ Yang asked whether there exist complete immersed complex submanifolds $\varphi\colon M^k\to\c^{N+1}$ $(k\leq N)$ with bounded image \cite{Yang1, Yang2}. 
Here, {\em complete} means that the Riemannian manifold $(M,\varphi^*ds)$ is complete, where $ds$ is the Euclidean metric in $\c^{N+1}$; equivalently, the image by $\varphi$ of every divergent path in $M$ has infinite Euclidean length.
 
P.\ Jones \cite{Jones}  was the first to construct a bounded complete holomorphic {\em immersion} $\d\rightarrow \c^2$, a bounded complete holomorphic {\em embedding} $\d\hookrightarrow \c^3$, and a proper complete holomorphic embedding $\d\hookrightarrow \bc_4$. 
Jones' pioneering results have been extended to the existence of proper complete  holomorphic immersions $\Rcal\rightarrow \bc_2$ and embeddings $\Rcal\hookrightarrow \bc_3$, where $\Rcal$ is either an open Riemann surface of arbitrary topology (see Alarc\'{o}n and L\'{o}pez \cite{AL-CY} and Alarc\'on and Forstneri\v c \cite{AF-2}), or a given bordered Riemann surface (see Alarc\'{o}n and  Forstneri\v c \cite{AF-1,AF-3}). Further, here $\bc_2$ and $\bc_3$ may be replaced by any convex domain in $\c^2$ and $\c^3$, respectively. Moreover, given $k\in\n$, an easy application of these results furnishes bounded complete holomorphic immersions $\Rcal^k:=\Rcal\times \stackrel{k}{\cdots}\times \Rcal\to\c^{2k}$ and embeddings $\Rcal^k\hookrightarrow \c^{3k}$ \cite{AF-1}. In the same direction, B.\ Drinovec Drnov\v sek \cite{D} recently proved that every bounded strictly pseudoconvex  domain $D\subset\c^k$ with $\mathscr{C}^2$-boundary admits a complete proper holomorphic embedding $D\hookrightarrow \bc_{N+1}$ provided that the codimension $N+1-k$ is large enough.

To find complete bounded holomorphic {\em embeddings} $\varphi\colon M^N\hookrightarrow \c^{N+1}$ is considerably more difficult. For instance, the self-intersection points of hypersurfaces in $\c^{N+1}$ are generic and thus cannot be removed by small perturbations, which 
is possible when the codimension is large enough (to be precise, when $2k\leq N$). So, any induction method for constructing such hypersurfaces will have to take this into account and at no step create self-intersection points.  

The embedding problem was settled in the lowest dimensional case by Alarc\'{o}n and L\'{o}pez, who proved that every convex domain in $\c^2$ contains a complete properly embedded complex curve \cite{AL-JEMS}. Their examples come after a recursive construction process which applies, at each step, a self-intersection removal procedure consisting of replacing every normal crossing 
in a complex curve by an embedded annulus. This process, which must be done while ensuring the completeness of the limit curve, is very delicate and does not provide any control on the topology of the curve. In principle, the examples could be of very complicated topology.

After this, a different approach was used by Globevnik \cite{Glob1}.  He found an embedded complete holomorphic curve as a level set of some wildly oscillating holomorphic function on $\bc_2$.  This construction worked also in higher dimensions which lead to the complete solution of the Yang problem in all dimensions by proving that, for any $N\in \n$, there is a complete, closed complex hypersurface in $\bc_{N+1}\subset \c^{N+1}$. The same holds when replacing $\bc_{N+1}$ by any pseudoconvex  domain in $\c^{N+1}$ \cite{Glob2}. Again, this procedure does not supply any information about the topology of the hypersurface, which could be very involved.

 So at the moment there are two different methods to prove the existence of complete, closed hypersurfaces in  $\bc_{N+1}$ when $N=1$, and one when $N\geq 2$. Neither of these methods provides any information about the topology of such a hypersurface.  In this paper we develop a conceptually new technique for constructing complete closed complex hypersurfaces in the unit ball $\bc_{N+1}\subset\c^{N+1}$, $N\in\n$, which in addition admits control on the topology of the examples. In particular, we show that there is a complete proper holomorphic embedding $\d\hookrightarrow\bc_2$  (see Corollary \ref{co:main2} below) and thus answer a question left open in \cite{AL-JEMS,Glob1}.

Before stating our results we need some background.
Given a Stein manifold $X$, we denote by $\Ocal(X)$ the algebra of holomorphic functions $X\to\c$. A domain (open and connected subset) $\Omega\subset X$  is called a {\em pseudoconvex domain} if it has a strongly plurisubharmonic exhaustion function. In particular, $\Omega$ endowed  with the induced complex structure is a Stein manifold as well. A domain $\Omega\subset X$ is said to be a {\em Runge domain in $X$} if every holomorphic function $\Omega\to\c$ can be uniformly approximated on compact subsets of $\Omega$ by functions in $\Ocal(X)$.

%
% MAIN THEOREM
%

Our main result may be stated as follows.
\begin{theorem}\label{th:main}
Let $Z\subset \c^{N+1}$ $(N\in\n)$ be a  
closed complex hypersurface such that $Z\cap \bc_{N+1}\neq\emptyset$, let $K\subset Z\cap \bc_{N+1}$ be a connected compact subset, and let $\epsilon>0$. There exists  a pseudoconvex 
domain $D\subset Z$ with the following properties:
\begin{enumerate}[{\rm (i)}]
\item  $K\subset D$.
\item $D$ is Runge in $Z$.
\item There exists a complete proper holomorphic embedding $\psi\colon D\hookrightarrow\bc_{N+1}$ such that $|\psi(\zeta)-\zeta|<\epsilon$ for all $\zeta\in K$. 
\end{enumerate}

In particular, $\bc_{N+1}$ contains complete closed complex hypersurfaces which are biholomorphic to a pseudoconvex Runge domain in $\c^N$.
\end{theorem}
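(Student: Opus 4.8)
The plan is to construct $D$ and $\psi$ by an exhaustion-and-correction scheme, following the circle of ideas around the proof of Theorem \ref{th:main} but treating the hypersurface $Z$ and the compact set $K$ as the seed of the construction. First I would fix a holomorphic embedding $Z\hookrightarrow\c^{N+1}$ and, using that $Z$ is a closed complex hypersurface, pass to a normal-crossing-free picture: the goal is to build $D$ as an increasing union $D=\bigcup_{n\in\n}D_n$ of smoothly bounded, strongly pseudoconvex, $\Ocal(Z)$-convex (hence Runge) domains $D_n\Subset D_{n+1}$ with $K\subset D_1$, together with a sequence of holomorphic embeddings $\psi_n\colon \overline{D_n}\hookrightarrow\bc_{N+1}$ converging uniformly on compacts of $D$ to the desired $\psi$. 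The initial embedding $\psi_1$ is taken to be $\epsilon/2$-close to the inclusion $K\hookrightarrow\bc_{N+1}$ on $K$, which secures property (iii)'s estimate on $K$ in the limit (by making the later corrections summably small there).

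Next I would carry out the inductive step, which is where the two competing requirements — \emph{completeness} and \emph{properness}, against \emph{embeddedness} — must be reconciled without ever introducing self-intersections. The key device is that of \emph{labyrinths of walls} inside the shell $\overline{D_{n+1}}\setminus D_n$: one inserts, in $Z$, a finite family of pieces of real hypersurfaces through which every path escaping $D_n$ must pass many times, and then one modifies $\psi_n$ to $\psi_{n+1}$ by a holomorphic map, close to $\psi_n$ on $\overline{D_n}$, which pushes the image of this labyrinth very close to $b\bc_{N+1}$. Doing this carefully makes the Euclidean length of the $\psi$-image of any divergent path blow up (completeness) and at the same time drags the image of $bD_{n+1}$ toward $b\bc_{N+1}$ (setting up properness of the limit). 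The approximation at each stage is legitimate because $D_n$ is $\Ocal(Z)$-convex and, crucially, $Z$ is Runge in $\c^{N+1}$ in the relevant sense, so holomorphic maps on a neighborhood of $\overline{D_n}$ in $Z$ extending/approximating can be realized by maps into $\bc_{N+1}$; embeddedness is preserved because each correction is a small perturbation of an embedding of a strongly pseudoconvex domain and the perturbation can be chosen generic.

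For the \textbf{final} statement, I would simply specialize: take $Z=\c^N\times\{0\}\subset\c^{N+1}$, which is a closed complex hypersurface with $Z\cap\bc_{N+1}\ne\emptyset$, and take $K$ to be any closed ball in $Z$ of small radius centered at the origin (connected and compact, contained in $Z\cap\bc_{N+1}$). Apply Theorem \ref{th:main} with, say, $\epsilon=1$: it yields a pseudoconvex domain $D\subset Z\cong\c^N$ that is Runge in $Z\cong\c^N$, together with a complete proper holomorphic embedding $\psi\colon D\hookrightarrow\bc_{N+1}$. Then $\Sigma:=\psi(D)$ is a closed complex hypersurface of $\bc_{N+1}$ (the image of a proper embedding), it is complete (as $\psi$ is a complete embedding), and it is biholomorphic to $D$, which is a pseudoconvex Runge domain in $\c^N$. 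This is exactly the asserted conclusion, so the ``in particular'' clause follows with no extra work once Theorem \ref{th:main} is in hand.

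The main obstacle — and the technical heart of the whole argument — is the inductive step: engineering the labyrinth and the correcting holomorphic map so that \emph{simultaneously} (a) the length of escaping paths grows without bound, (b) the boundary $bD_{n+1}$ is pushed uniformly toward $b\bc_{N+1}$ so that the limit map is proper, (c) the map stays an embedding (no normal crossings are ever created, a genuinely codimension-one difficulty absent in higher-codimension constructions), and (d) all changes are small enough on $\overline{D_n}$ — in particular on $K$ — to guarantee convergence and the estimate in (iii). Balancing (a)–(d) at every stage, while keeping the domains pseudoconvex and Runge, is precisely what makes the topology of $D$ controllable (it is whatever increasing union of handlebodies one chose at the outset), and it is where essentially all the work lies; the reduction of the ``in particular'' statement, by contrast, is immediate.
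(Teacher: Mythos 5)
Your scheme has a genuine gap precisely at the point the paper identifies as the central difficulty of the codimension-one problem. In the inductive step you assert that ``embeddedness is preserved because each correction is a small perturbation of an embedding of a strongly pseudoconvex domain and the perturbation can be chosen generic.'' For a hypersurface $M^N\subset\c^{N+1}$ this is false: self-intersections of complex hypersurfaces are stable under small perturbations (genericity removes double points only when $2k\leq N$, i.e.\ in sufficiently high codimension), so a generic small correction of $\psi_n$ that pushes a labyrinth toward $b\bc_{N+1}$ has no reason to remain injective, and once a normal crossing appears it cannot be perturbed away. This is exactly why the labyrinth-and-correction method, as used for curves in $\c^3$ or for the immersed case, does not transplant to hypersurfaces, and why the earlier embedded construction in $\c^2$ needed a delicate surgery replacing each crossing by an annulus (losing all control of the topology). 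Your proposal does not supply any substitute mechanism, so the inductive step as written fails.

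The paper's route is structured to avoid this issue entirely: all corrections are holomorphic \emph{automorphisms} of $\c^{N+1}$ applied to the fixed closed hypersurface $Z$ (concretely, shears $\Phi(z,\xi)=(z,e^{\psi(z)}\xi)$ as in Lemma \ref{lem:shear1}), so embeddedness is automatic at every stage. Moreover, the completeness device is dual to yours: instead of inserting labyrinths in the source $Z$ and pushing their images outward, one fixes once and for all a ``tidy'' collection of tangent balls in the target ball $\bc_{N+1}$ (Theorem \ref{th:infinity}) with the property that every path tending to the sphere and missing all the balls has infinite length, and then recursively composes automorphisms, each close to the identity on a large convex set, so that the image of $Z$ misses all these obstacles (Theorem \ref{th:disc}). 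The limit of the automorphisms is a biholomorphism $\Psi$ of a Runge domain $\Omega\subset\c^{N+1}$ onto $\bc_{N+1}$, and $D$ is the component of $\Omega\cap Z$ containing $K$; properness, pseudoconvexity and the Runge property of $D$ then come for free from $\Psi$ being a biholomorphism onto the ball and $\Omega$ being Runge. Your derivation of the final ``in particular'' clause is correct, but the body of your argument would need to be replaced by (or supplemented with) an embeddedness-preserving mechanism of this kind.
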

These are the first known examples of complete bounded complex hypersurfaces in $\c^{N+1}$, $N\in\n$, for which one has any topological information. 
In $\c^2$  we have the following more precise result:
\begin{corollary}\label{co:main2}
Let $Z\subset \c^2$ be a properly embedded complex curve and let $K\subset Z\cap \bc_2$ be a compact connected subset. 
Given $\epsilon>0$ there exist a Runge domain $D\subset Z$  and a complete proper holomorphic embedding $\psi\colon D\hookrightarrow \bc_2$ such that $K\subset D$ and $|\psi(x)-x|<\epsilon$ for all $x\in K$.

As a consequence,  the unit ball $\bc_2$ of $\c^2$ carries complete properly embedded complex curves with any finite topology. In particular, there are proper complete holomorphic embeddings $\d\hookrightarrow \bc_2$.
\end{corollary}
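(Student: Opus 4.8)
The plan is to deduce the corollary from Theorem \ref{th:main} applied in the case $N=1$. The first step is the main structural point: given the prescribed finite topology, I would choose the ambient curve $Z\subset\c^2$ so that the pseudoconvex Runge domain $D\subset Z$ provided by Theorem \ref{th:main} realizes exactly that topology. Here one uses that a domain in a Riemann surface which is Runge carries no extra topology — Runge domains in an open Riemann surface are precisely the domains whose complement has no relatively compact connected components (``no holes''), so $D$ is determined up to homeomorphism by the genus and the number of ends, and both can be controlled. Concretely, to realize finite topological type $(g,m)$ (genus $g$, $m\geq1$ ends), I would start from a suitable properly embedded finite-genus curve $Z\subset\c^2$ — e.g. a smooth affine curve of the appropriate genus whose intersection with $\bc_2$ is nonempty — and pick the compact connected set $K\subset Z\cap\bc_2$ to be a large enough smoothly bounded domain (a ``core'') carrying all the genus and having $m$ boundary components, so that any Runge domain $D$ with $K\subset D$ is a deformation retract neighborhood of $K$ and hence has the same topology as $K$.

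The second step is to invoke Theorem \ref{th:main}: with $Z$, $K$, and $\epsilon$ as above, we obtain a pseudoconvex Runge domain $D\subset Z$ with $K\subset D$, $D$ Runge in $Z$, and a complete proper holomorphic embedding $\psi\colon D\hookrightarrow\bc_2$ with $|\psi(x)-x|<\epsilon$ on $K$. This already gives the first paragraph of the corollary verbatim. For the topological conclusion I then combine step one and step two: since $K\subset D\subset Z$ with $K$ a compact connected smoothly bounded core and $D$ Runge (hence holeless) in $Z$, the inclusion $K\hookrightarrow D$ is a homotopy equivalence, so $D$ — and therefore the embedded complex curve $\psi(D)\subset\bc_2$ — has the prescribed finite topology. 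Taking $g=0$, $m=1$ gives a $D$ homeomorphic to a disc; since a simply connected open Riemann surface that is not conformally $\c$ must be biholomorphic to $\d$ by the uniformization theorem (and $D$ is bounded, hence not $\c$), we get a complete proper holomorphic embedding $\d\hookrightarrow\bc_2$.

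I expect the main obstacle to be the first step: arranging that one can prescribe the topology of $D$ rather than merely that of $K$. A priori Theorem \ref{th:main} only asserts $K\subset D$, and $D$ could in principle be larger; the Runge condition is exactly what prevents $D$ from acquiring new handles or new bounded complementary components, but one must be careful that passing from $K$ to $D$ does not add ends either — this is handled by taking $K$ to already contain a full system of ``cuts'' representing the ends of the desired surface, so that $D$ retracts onto $K$. A secondary point is the existence, for every $g\geq0$, of a properly embedded smooth affine curve in $\c^2$ of genus $g$ meeting $\bc_2$; this is classical (one can take suitable plane curves, or branched covers of $\c$), and only the intersection $Z\cap\bc_2\neq\emptyset$ needs checking, which can be arranged by an affine change of coordinates. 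Everything else is a direct citation of Theorem \ref{th:main} together with the uniformization theorem and the standard characterization of Runge domains in Riemann surfaces.
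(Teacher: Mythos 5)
Your proposal is correct and follows essentially the same route as the paper: apply Theorem \ref{th:main} with $N=1$ to a properly embedded curve $Z\subset\c^2$ of the prescribed finite topology, taking $K$ to be a compact core carrying all of the topology of $Z$ (the paper arranges this by a homothety so that $K=Z\cap\frac12\overline\bc_2$ and $Z\setminus K$ is a union of annular ends), and then use the Runge property of $D$ to conclude that $D$ retracts onto $K$ and hence is homeomorphic to $Z$. The one point to repair is your suggested source of $Z$: smooth affine algebraic plane curves cannot realize arbitrary pairs (genus, number of ends), and the existence of properly embedded complex curves in $\c^2$ with any prescribed finite topology is a genuine theorem of \v Cerne and Forstneri\v c \cite{CF}, which is what the paper cites; also, to get $\d$ rather than $\c$ in the simply connected case, argue via Liouville that $D$ cannot be biholomorphic to $\c$ because $\psi(D)\subset\bc_2$ is bounded (rather than asserting that $D$ itself is bounded).
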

The second part of the corollary follows from the well-known fact that  all surfaces of finite topology can be realized as Runge domains of properly embedded complex curves in $\c^2$ (see \v Cerne and Forstneri\v c \cite{CF} and Section \ref{sec:main}).

Our technique is different from the ones in \cite{AL-JEMS,Glob1}. We begin with a 
closed complex hypersurface $Z\subset \c^{N+1}$, intersecting $\bc_{N+1}$, a compact subset $K\subset Z\cap \bc_{N+1}$,  and the natural embedding $Z \hookrightarrow \c^{N+1}$ given by the inclusion map. In a recursive process, we compose this initial embedding with a sequence of
holomorphic automorphisms of $\c^{N+1}$ which converges uniformly in compact subsets of $\bc_{N+1}$. In this way we obtain a sequence of proper holomorphic embeddings  $Z \hookrightarrow \c^{N+1}$ whose images converge uniformly on compact subsets of $\bc_{N+1}$ to a closed embedded complex hypersurface of $\bc_{N+1}$. Moreover, carrying out this process in the right way, we may ensure that a connected component $D$ of the resulting hypersurface is biholomorphic to a pseudoconvex Runge domain in $Z$ containing $K$, and is closed in $\bc_{N+1}$ and complete.

This program will be performed in two different steps, which we now describe.
For a while we shall be working in real Euclidean space $\r^{n+1}$, $n\in\n$. We let
 $\b_{n+1}$ and $\s^{n}=b\b_{n+1}$ denote the unit ball and the unit sphere in $\r^{n+1}$ of center $0$ and radius $1$; with this notation, $\bc_{N+1}=\b_{2N+2}$ for all $N\in\n$.
We denote by $\langle \cdot,\cdot \rangle$ and $|\cdot|$ the Euclidean scalar product and norm in $\r^{n+1}$. We write $q+r C$ for the set $\{q+r p\colon p\in C\}\subset\r^{n+1}$ for $r>0$, $q\in\r^{n+1}$, and $C\subset \r^{n+1}$. Finally, given $p\in\r^{n+1}\setminus\{0\}$, we set $\langle p\rangle^\bot:=\{q\in\r^{n+1}\colon \langle q,p\rangle=0\}$. Observe that $p+\langle p\rangle^\bot$ is the affine tangent hyperplane to the sphere $|p|\s^n$ at the point $p$. The following objects play a fundamental role in our construction.

\begin{definition}\label{def:balls}
Given $p\in\r^{n+1}\setminus\{0\}$ and $r>0$, the set
\[
\oB(p,r):=p+(r\overline \b_{n+1}\cap \langle p\rangle^\bot)
\]
will be called {\em the (closed) tangent ball in $\r^{n+1}$ of center $p$ and radius $r$}.
\end{definition}

Thus, for $p\in\r^{n+1}\setminus\{0\}$ and $r>0$, the tangent ball $\oB(p,r)$ is the closed ball of center $p$ and radius $r$ in the affine hyperplane $p+\langle p\rangle^\bot$. Given a collection $\Fgot$ of tangent balls in $\r^{n+1}$, we denote
\[
|\Fgot|:=\bigcup_{T\in\Fgot} T \subset \r^{n+1}.
\]
We will gather tangent balls in $\r^{n+1}$ into collections which we call {\em tidy} according to the following
\begin{definition}\label{def:tidy}
A collection $\Fgot=\{\oB(p_j,r_j)\}_{j\in J}$ of tangent balls in $\b_{n+1}\subset\r^{n+1}$ will be called  {\em a tidy collection} if the following conditions are satisfied:
\begin{itemize}
\item $t\overline\b_{n+1}$ intersects finitely many balls in $\Fgot$ for all $0<t<1$.
\item If $\oB(p_1,r_1),\oB(p_2,r_2)\in\Fgot$ and $|p_1|=|p_2|$, then $r_1=r_2$ and $\oB(p_1,r_1)\cap\oB(p_2,r_2)=\emptyset$.
\item If $\oB(p_1,r_1),\oB(p_2,r_2)\in\Fgot$ and $|p_1|<|p_2|$, then $\oB(p_1,r_1)\subset |p_2|\b_{n+1}$.
\end{itemize}
In particular, $\Fgot$ consists of at most countably many pairwise disjoint tangent balls and $|\Fgot|$ is a proper subset of $\b_{n+1}$.
\end{definition}

Thus, all the balls in a tidy collection $\Fgot$ which are tangent to the same sphere have the same radius and are pairwise disjoint; if $\Fgot$ contains balls tangent to two spheres then all the balls tangent to the smaller sphere are contained in the open ball whose boundary is the larger sphere. If $\Fgot'\subset \Fgot$ consists of those balls in $\Fgot$ which are tangent to a given sphere $\lambda \s^n$, $0<\lambda<1$, then $\Fgot'$ is also tidy and  there is $\mu\in ]\lambda,1[$ such that the boundary $bT$ lies in $\mu \s^n$ for any $T\in \Fgot'$. In particular, for each $T\in \Fgot'$ the affine hyperplane of $\r^{n+1}$ containing $T$ is disjoint from the convex hull of $|\Fgot'|\setminus T$. This simple property of tidy collections will be crucial in our construction.

Tangent balls  in $\b_{n+1}$ may be viewed as obstacles on the way towards the boundary $\s^n$ when we want to reach $\s^n$ along a path in $\b_{n+1}$ that misses all the balls in a collection. The first step in the proof of Theorem \ref{th:main} is to construct a tidy collection $\Fgot$ of tangent balls in $\b_{n+1}$ in such a way that every path $\gamma\colon [0,1[ \rightarrow \b_{n+1}$ satisfying $\lim _{t\rightarrow 1}|\gamma(t)| = 1$ and missing all the balls of $\Fgot$, has infinite length. 

\begin{theorem}[Building obstacles]
\label{th:infinity}
Let $n\in\n$ and $0<\lambda_0<1$.
There exists a tidy collection $\Fgot$ of tangent balls in $\b_{n+1}\subset\r^{n+1}$ satisfying $|\Fgot|\cap \lambda_0 \overline\b_{n+1}=\emptyset$ and having the following property: Every path $\gamma\colon[0,1[\to\b_{n+1}$ such that 
$\lim _{t\rightarrow 1}|\gamma(t)| = 1$ and $\gamma([0,1[)\cap|\Fgot|=\emptyset$, has infinite length. 

In particular, every closed submanifold of $\b_{n+1}$ missing $|\Fgot|$ is complete.
\end{theorem}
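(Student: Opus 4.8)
The plan is to build the tidy collection $\Fgot$ in countably many ``generations'', one for each sphere $\lambda_k\s^n$ in an increasing sequence $\lambda_0<\lambda_1<\lambda_2<\cdots\nearrow 1$ chosen during the construction. At generation $k$ we place a finite family $\Fgot_k$ of pairwise disjoint tangent balls, all tangent to $\lambda_k\s^n$ and all of the same radius $r_k$, whose union ``shadows'' the shell $\{\lambda_{k-1}\le |x|\le \lambda_k\}$ in the following quantitative sense: any path that crosses this shell in the radial direction, i.e.\ goes from $|x|=\lambda_{k-1}$ to $|x|=\lambda_k$ while avoiding $|\Fgot_k|$, must make a lateral detour of length at least some fixed $\delta_k>0$. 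If we arrange $\sum_k \delta_k=\infty$, then any path $\gamma$ with $|\gamma(t)|\to 1$ that misses all of $|\Fgot|$ crosses infinitely many shells, accumulates at least $\delta_k$ of length in the $k$-th shell, and hence has infinite length. The condition $|\Fgot|\cap\lambda_0\overline\b_{n+1}=\emptyset$ is immediate since every ball we place is tangent to some $\lambda_k\s^n$ with $\lambda_k>\lambda_0$, and the three tidy conditions are built in by construction: finitely many balls meet $t\overline\b_{n+1}$ for $t<1$ because only generations with $\lambda_k\le t$ contribute (and each is finite); balls in the same generation have equal radius and are disjoint by fiat; and a ball of generation $j<k$ lies in $\lambda_j\overline\b_{n+1}\subset \lambda_k\b_{n+1}$ provided the radii $r_j$ are small relative to the gaps $\lambda_k-\lambda_j$, which we can always ensure by shrinking.

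The geometric heart of the matter is the single-shell estimate: given the sphere $\lambda_k\s^n$, I want a finite family of equal-radius tangent balls tangent to it such that one cannot pass from just inside to just outside a thin spherical shell around it without a definite sideways excursion. Here is how I would get it. Fix the sphere $S=\lambda_k\s^n$. A tangent ball $\oB(p,r)$ with $p\in S$ covers, in the radial projection onto $S$, a spherical cap of angular radius $\approx r/\lambda_k$ centered at $p/|p|$; moreover a point $x$ with $|x|$ slightly larger than $\lambda_k$ whose radial projection lands inside this cap is ``blocked'' — to get from $|x|<\lambda_k$ to such an $x$ along a path avoiding the ball, the path must leave the cap, and because the ball sits in the tangent hyperplane $p+\langle p\rangle^\bot$ (which by the remark after Definition \ref{def:tidy} is disjoint from the convex hull of the rest of the collection of this generation, so the balls do not help each other) this forces a detour whose length is bounded below by roughly the cap radius, i.e.\ by $\asymp r$. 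So I choose a finite set of points $p_1,\dots,p_{m_k}\in S$ that is $\eta_k$-dense on the sphere for a suitable $\eta_k\searrow 0$, set the common radius $r_k$ to be a fixed small multiple of $\eta_k$ — small enough that the caps are pairwise disjoint (hence the balls are pairwise disjoint), yet large enough that every point of $S$ is within distance $\eta_k$ of some $p_j$ — and take $\Fgot_k=\{\oB(p_j,r_k)\}_{j=1}^{m_k}$. Then any radial crossing of the shell has its projection pass within $\eta_k$ of some cap center, so it must detour around that cap, giving $\delta_k\asymp r_k\asymp\eta_k$. Finally I pick the sequence $\eta_k$ (equivalently $r_k$) so that $\eta_k\to 0$ fast enough to make the balls of consecutive generations nest as required and keep each generation inside the next sphere, but so that still $\sum_k\eta_k=\infty$ — e.g.\ $\eta_k\sim c/k$ after choosing the $\lambda_k$ to approach $1$ slowly enough to accommodate this.

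I expect the main obstacle to be making the single-shell detour estimate rigorous: one has to show that avoiding a tangent \emph{disc} lying in an affine hyperplane really does cost length on the order of its radius for a path that ``wants'' to cross the hyperplane near the disc's center, and to do this uniformly over all relevant paths and uniformly in $k$. The clean way is to use the separating-hyperplane property highlighted after Definition \ref{def:tidy}: because the affine hyperplane $H_j$ carrying $\oB(p_j,r_k)$ is disjoint from the convex hull of $|\Fgot_k|\setminus \oB(p_j,r_k)$, once a path is trapped ``behind'' $\oB(p_j,r_k)$ it can only escape by going around the rim of that single disc, and the planar/spherical geometry then gives the lower bound on arc length. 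A secondary technical point is bookkeeping the constants so that all three tidiness conditions, the disjointness from $\lambda_0\overline\b_{n+1}$, and the divergence of $\sum\delta_k$ hold simultaneously; this is routine once the shell estimate is in hand. The final sentence of the theorem is then immediate: a closed submanifold $M\subset\b_{n+1}$ with $M\cap|\Fgot|=\emptyset$ has the property that every divergent path in $M$ either stays in some $t\overline\b_{n+1}$ (impossible, as $M$ is closed in $\b_{n+1}$) or satisfies $|\gamma(t)|\to 1$ along a subsequence and, after passing to a genuinely divergent tail, has infinite length by the main assertion; hence $M$ is complete.
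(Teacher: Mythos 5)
The single-shell estimate at the heart of your construction fails as stated, and the failure is precisely the difficulty that the paper's key lemma is designed to overcome. You propose that each generation consist of one finite family of equal-radius tangent balls, all tangent to the \emph{same} sphere $\lambda_k\s^n$ and pairwise disjoint, whose radial projections (spherical caps) nonetheless block every radial crossing of the shell. But two tangent balls $\oB(p,r)$, $\oB(q,r)$ with $|p|=|q|=\lambda_k$ lie in the tangent hyperplanes at $p$ and $q$, which meet at distance $\lambda_k\tan(\theta/2)$ from $p$ ($\theta$ the angle between $p$ and $q$); hence the balls are disjoint precisely when $r<\lambda_k\tan(\theta/2)$, which is exactly the condition that the caps $\pgot(\oB(p,r))$ and $\pgot(\oB(q,r))$, each of angular radius $\arctan(r/\lambda_k)$, have disjoint interiors. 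So a single generation of pairwise disjoint balls on one sphere yields caps with pairwise disjoint interiors, and such caps cannot cover $\s^n$ with any positive margin: a radial segment through a gap (or through a common boundary point of two caps, after an arbitrarily small perturbation) crosses the shell avoiding $|\Fgot_k|$ with zero lateral excursion, so your $\delta_k$ is $0$. ``Within distance $\eta_k$ of some $p_j$'' does not put the projection of the path \emph{inside} a cap with definite margin, which is what the detour argument requires.

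The paper resolves this with Lemma \ref{lem:points}: for each $n$ there are constants $\mgot_n$ and $\cgot_n<1/2$, depending only on $n$, such that for every $r>0$ the sphere admits $\mgot_n$ finite subsets $F_1,\dots,F_{\mgot_n}$, each $r$-separated, whose \emph{union} is $\cgot_n r$-dense. Each class $F_j$ yields pairwise disjoint tangent balls, and the $\mgot_n$ classes are attached to $\mgot_n$ distinct spheres $s_1<\dots<s_{\mgot_n}$ inside the shell (Lemma \ref{lem:r1r2}), so balls from different classes are automatically disjoint and the collection is tidy, while the union of all the caps covers $\s^n$ with margin $(\frac12-\cgot_n)r$; this margin is what forces every crossing to detour by $\asymp r$. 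Lemma \ref{lem:points}, proved by induction on $n$, is the genuinely nontrivial combinatorial input missing from your sketch. (Your remaining bookkeeping --- radii of order $\sqrt{\lambda_{k+1}-\lambda_k}$ so that each generation fits inside the next sphere while the radii still sum to infinity, and the reduction of completeness of closed submanifolds to the path statement --- does match the paper's Lemma \ref{lem:r1r2+} and Theorem \ref{th:infinity2} and is fine.)
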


Theorem \ref{th:infinity} will be proved in Sec.\ \ref{sec:infinity} (see the more general Theorem \ref{th:infinity2}).
It is clear that a collection $\Fgot$ as in Theorem \ref{th:infinity} will consists of infinitely many tangent balls. 

The second step in the proof of Theorem \ref{th:main} is to construct a proper holomorphic embedding from a domain $D$ in any given 
closed complex hypersurface $Z\subset \c^{N+1}$ to $\bc_{N+1}$, whose image misses all the balls in a given tidy collection $\Fgot$ of tangent balls in $\bc_{N+1}$. 

\begin{theorem}[Avoiding obstacles]
\label{th:disc}
Let $N\in\n$.
Let $\Fgot$ be a tidy collection of tangent balls in the open unit ball $\bc_{N+1}\subset\c^{N+1}$, and  choose $\lambda_0>0$ such that $|\Fgot|\cap \lambda_0 \overline\bc_{N+1} =\emptyset$. Let $Z\subset \c^{N+1}$ be a  closed complex hypersurface such that $Z\cap\lambda_0\overline\bc_{N+1}\neq\emptyset$ and let $\epsilon>0$.  

There exist a Runge domain $\Omega\subset \c^{N+1}$ containing $\lambda_0 \overline\bc_{N+1}$  and a biholomorphic map $\Psi\colon \Omega\to \bc_{N+1}$ such that:
\begin{enumerate}[\rm (i)]
\item $|\Psi(\zeta)-\zeta|<\epsilon$ for all $\zeta \in\lambda_0 \overline\bc_{N+1}$.
\item $\Psi(\Omega\cap Z)\cap|\Fgot|=\emptyset$.
\end{enumerate}

In particular, every connected component $D$ of $\Omega\cap Z\neq \emptyset$ is a pseudoconvex Runge domain in $Z$  and $\Psi|_D\colon D\hookrightarrow\bc_{N+1}$ is a proper holomorphic embedding whose image misses $|\Fgot|$.
\end{theorem}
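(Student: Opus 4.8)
The plan is to construct $\Psi$ as a limit of a sequence of holomorphic automorphisms of $\c^{N+1}$, built recursively so that at the $k$-th stage the image of $Z$ (intersected with an appropriate compact set) has been pushed clear of the finitely many tangent balls of $\Fgot$ lying inside $(1-1/k)\overline{\bc}_{N+1}$. Concretely, I would fix an exhaustion of $\bc_{N+1}$ by balls $\lambda_0\overline\bc_{N+1}=:B_0\subset B_1\subset\cdots$, with $\bigcup_k B_k=\bc_{N+1}$, chosen compatibly with the radii appearing in $\Fgot$ (using tidiness: only finitely many balls of $\Fgot$ meet each $t\overline\bc_{N+1}$, $t<1$). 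Starting from $\Psi_0=\mathrm{id}$, assume inductively that $\Psi_k\colon\c^{N+1}\to\c^{N+1}$ is an automorphism with $\Psi_k(\Psi_{k-1}^{-1}(B_{k-1})\cap Z)$ avoiding all balls of $\Fgot$ tangent to spheres of radius $\le$ the radius of $B_{k-1}$, and with $|\Psi_k-\Psi_{k-1}|$ tiny on a large compact set. To pass from $k$ to $k+1$ one must handle the finitely many new tangent balls $T\in\Fgot$ lying between $\partial B_{k-1}$ and $\partial B_k$; here the key geometric feature recorded after Definition \ref{def:tidy} enters: the balls tangent to a given sphere $\mu\s^n$ are pairwise disjoint, and the affine hyperplane through each such $T$ is disjoint from the convex hull of the remaining balls tangent to that sphere. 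This lets us treat each $T$ essentially independently with a shear-type automorphism supported near $T$.

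The heart of the induction step is therefore: given the current embedded hypersurface $Z_k:=\Psi_k(\Omega_k\cap Z)$ (a closed complex hypersurface in a neighborhood of $\overline B_k$) and one tangent ball $T=\oB(p,r)\subset\c^{N+1}$, together with a thin neighborhood $N(T)$ that is Runge and meets no other ball of the current batch, find an automorphism $\phi$ of $\c^{N+1}$, close to the identity off $N(T)$, such that $\phi(Z_k)\cap T=\emptyset$ while $\phi(Z_k)$ remains embedded and proper. For this I would use the Andersén–Lempert theory: $T$ lies in an affine hyperplane $H=p+\langle p\rangle^\bot$ (complexified appropriately), and since $H$ misses the convex hull of the other obstacles, there is a holomorphic vector field on $\c^{N+1}$, roughly a multiple of the unit normal to $H$ times a function cutting off near $T$, whose time-one flow displaces a neighborhood of $Z_k\cap T$ off $H$ — more carefully, one first uses a generic linear change to make $Z_k$ transverse to $H$ near $T$ so that $Z_k\cap H$ is a submanifold of real codimension $2$ inside $H$, hence can be engulfed by a small neighborhood, and then a push in the normal direction clears $T$. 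The approximation by automorphisms of $\c^{N+1}$ (rather than mere self-maps of a neighborhood) is exactly what Andersén–Lempert/Forstnerič–Rosay supplies, using that $N(T)$ together with the rest of $\overline B_k$ forms a Runge, polynomially convex configuration. One must also preserve the conclusion already achieved on $B_{k-1}$; this is ensured by making $\phi$ uniformly close to the identity there, and by choosing the tolerances $\epsilon_k\to 0$ fast enough (a standard Cauchy-telescoping argument) that the infinite composition $\Psi=\lim_k\Psi_k$ converges to a biholomorphism on the increasing union $\Omega:=\bigcup_k\Psi_k^{-1}(B_k)$, with $\Psi(\Omega)=\bc_{N+1}$, $\Omega$ Runge in $\c^{N+1}$, and $|\Psi(\zeta)-\zeta|<\epsilon$ on $\lambda_0\overline\bc_{N+1}$ (by making the very first perturbation the only one allowed to move points of $B_0$ appreciably, and all later ones negligible there).

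Granting the limit exists with these properties, the final assertions follow formally. Since $\Psi(\Omega\cap Z)=\bigcup_k\Psi(\Psi_k^{-1}(B_{k-1})\cap Z)$ and each such piece avoids all balls of $\Fgot$ tangent to spheres of radius $\le$ that of $B_{k-1}$, while every ball of $\Fgot$ is tangent to \emph{some} sphere $t\s^{2N+1}$ with $t<1$ and hence appears at some finite stage, the whole image $\Psi(\Omega\cap Z)$ misses $|\Fgot|$, giving (ii); (i) is built in. For the ``in particular'': $\Omega\cap Z$ is open in $Z$, so each connected component $D$ inherits the structure of a domain in the Stein manifold $Z$; $D$ is Runge in $Z$ because $\Omega$ is Runge in $\c^{N+1}$ and $Z$ is a closed complex hypersurface (holomorphic functions on $D$ extend/approximate via those on $\Omega$, then restrict), hence $D$ is pseudoconvex (a Runge domain in a Stein manifold is Stein). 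Finally $\Psi|_D\colon D\to\bc_{N+1}$ is a holomorphic embedding since $\Psi$ is a biholomorphism of $\Omega$ onto $\bc_{N+1}$ restricted to the closed submanifold $D=\Omega\cap Z$ (one component of it), and it is proper: a sequence in $D$ leaving every compact subset of $D$ is sent by $\Psi$ to a sequence leaving every compact subset of $\bc_{N+1}$, because $\Psi^{-1}$ carries compacts of $\bc_{N+1}$ to compacts of $\Omega$ and $D$ is closed in $\Omega$.

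**Main obstacle.** I expect the crux to be the single-obstacle step done \emph{without ever creating self-intersections}: one must push $Z_k$ off the tangent ball $T$ by an ambient automorphism that is small away from $T$, yet the displacement near $T$ must be large enough to clear an entire ball of fixed radius $r$ — and all of this while keeping the hypersurface embedded and not spoiling the clearance already obtained at earlier stages or near neighbouring balls tangent to the same sphere. The disjointness of each tangent ball's affine hyperplane from the convex hull of the others (the ``simple property'' emphasized after Definition \ref{def:tidy}) is precisely what makes a localized normal-direction shear possible; making the Andersén–Lempert approximation respect all the simultaneous smallness constraints, and bookkeeping the tolerances so the infinite composition converges to an \emph{onto} biholomorphism $\Omega\to\bc_{N+1}$ rather than something with smaller image, is the delicate technical core.
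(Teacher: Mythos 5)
Your overall architecture coincides with the paper's: order the tidy collection compatibly with a convex exhaustion $E_1\Subset E_2\Subset\cdots$ of $\bc_{N+1}$, clear one batch of tangent balls per step by an automorphism of $\c^{N+1}$ that is $\epsilon_j$-close to the identity on $\overline E_j$, keep a safety margin so later perturbations do not re-enter balls already cleared, and pass to the limit via the standard Forstneri\v c--Rosay/Anders\'en--Lempert convergence scheme to get a biholomorphism $\Psi\colon\Omega\to\bc_{N+1}$ with $\Omega=\bigcup_j\Psi_j^{-1}(\overline E_j)$ Runge. The closing arguments (stability neighborhoods, properness of $\Psi|_D$, Rungeness of $D$ in $Z$ via Cartan extension) are all essentially as in the paper, except that your shortcut ``$D$ is Runge in $Z$, hence Stein'' should be replaced by the direct observation that $D$ is a closed submanifold of the pseudoconvex domain $\Omega$.

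The genuine gap is in the single-obstacle step, which is exactly where the real content lies. You propose to clear a tangent ball $T=\oB(p,r)\subset H=p+\langle p\rangle^\bot$ by flowing along a field ``normal to $H$'' so as to displace a neighborhood of $Z_k\cap T$ off $H$. This does not work as described: $Z_k$ is a closed complex hypersurface (real codimension $2$), $H$ is a real hyperplane (real codimension $1$), and no shear in the normal direction can make $Z_k$ disjoint from $H$; it only replaces the intersection $Z_k\cap H$ by a nearby slice, which may still meet the full-dimensional ball $T\subset H$. Moreover, the transition region of any ``cutoff'' displacement must itself cross $H$, and nothing in your setup forces it to cross outside $T$; a compact neighborhood of $Z_k\cap T$ in $Z_k$ does not contain all of $Z_k$ lying over the (unbounded) normal slab through $T$. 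The paper's Lemma \ref{lem:shear1} resolves this with a different and elementary mechanism: choose complex coordinates so that $H=\{\Re z_1=0\}$ and $T\subset D'\times(\lambda\bc_N)$ with $D'$ a compact segment in $i\r$; after an arbitrarily small translation, $Z$ misses the real curve $D'\times\{0\}$ (by a dimension count $1+2N<2N+2$), hence misses a thin tube $D'\times(\eta\bc_N)$; then the overshear $\Phi(z,\xi)=(z,e^{\psi(z)}\xi)$, with $\psi$ obtained from one-variable Runge approximation so that $\Re\psi\gg0$ on $D'$ and $|\psi|\ll1$ on $\pi_1(E)$, inflates that $Z$-free tube until it engulfs $T$, so $\Phi(Z)\cap T=\emptyset$ while $\Phi\approx\mathrm{id}$ on $E$. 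In other words, one does not push the hypersurface off the obstacle; one expands a hypersurface-free region until it swallows the obstacle. Without this (or an equivalent) idea your induction step is not established, and the appeal to Anders\'en--Lempert theory, besides being heavier than necessary, does not by itself produce the required displacement.
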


The proof of Theorem \ref{th:disc} is included in Sec.\ \ref{sec:disc} and roughly goes as follows. Observe first that, given an infinite tidy collection $\Fgot=\{\oB(p_j,r_j)\colon j\in\n\}$ of tangent balls in $\bc_{N+1}$, we may assume  that the convex hull of $\bigcup_{i\leq j} \oB(p_i,r_i)$ is disjoint from $\bigcup_{i> j} \oB(p_i,r_i)$ for each $j\in \n$. Indeed, it suffices to order $\Fgot$ so that  $|p_i|\leq |p_j|$ if $i<j$.  Thus, there exists an exhaustion $\lambda_0 \overline \bc_{N+1} \Subset E_1 \Subset E_2\Subset \cdots \Subset \bigcup_{j\in \z_+} E_j=\bc_{N+1}$ of $\bc_{N+1}$ by smoothly bounded, strictly convex domains, such that $\overline E_1\cap |\Fgot|=\emptyset$, and for each $j\in \n$, $ \bigcup_{i\leq j} \oB(p_i,r_i)\subset E_{j+1}$  and $\overline E_{j+1}\cap (\bigcup_{i>j} \oB(p_i,r_i))=\emptyset$.  The key in the proof of Theorem \ref{th:disc} is to show that, given a closed complex hypersurface $X\subset\c^{N+1}$ which does not intersect $\bigcup_{i\leq j-1} \oB(p_i,r_i)$, there exists a holomorphic automorphism $\Psi_j\colon\c^{N+1}\to\c^{N+1}$ such that $\Psi_j$ is close to the identity in the compact convex set $\overline E_{j}$ and such that $\Psi_j(X)$, which is again a closed complex hypersurface, does not intersect $\bigcup_{i\leq j}\oB(p_i,r_i)$; cf.\ Lemma \ref{lem:shear1}. We obtain the biholomorphic map $\Psi\colon \Omega \to \bc_{N+1}$ in Theorem \ref{th:disc} as the limit of a sequence of automorphisms $\{\Psi_j\}_{j\in \n}$ generated in a  recursive way by application of this result.

Theorems \ref{th:infinity} and \ref{th:disc} easily imply Theorem \ref{th:main}; see Sec.\ \ref{sec:main}. 
Indeed, Theorem \ref{th:disc} applied to a tidy collection of tangent balls in $\bc_{N+1}$ given by Theorem \ref{th:infinity}, provides a complete, closed complex hypersurface in $\bc_{N+1}$, which is biholomorphic to a pseudoconvex Runge domain in a given closed complex hypersurface $Z\subset\c^{N+1}$.

%%%%%%%%%%
%%%%%%%%%%
%%%%%%%%%%
%%%%%%%%%% PRELIMINARIES
%%%%%%%%%%
%%%%%%%%%%

\section{The Building Obstacles Theorem}\label{sec:infinity} 

In this section we prove Theorem \ref{th:infinity} in a more general form; see Theorem \ref{th:infinity2} below.

Throughout the section we fix $n\in\n$, write $\b$ for the open unit ball $\b_{n+1}\subset \r^{n+1}$ and denote by $\pgot\colon \r^{n+1}\setminus\{0\}\to \s^n=b\b$ the radial projection 
\[
\pgot(x)=\frac{x}{|x|},\quad x\in \r^{n+1}\setminus\{0\}.
\]
We denote by $\dist(\cdot,\cdot)$ and ${\rm diam}(\cdot)$ the Euclidean distance and diameter in $\r^{n+1}$, and $\imath:=\sqrt{-1}$.

We begin with the following result, which is the key in the proof of Theorem \ref{th:infinity}.

\begin{lemma}
\label{lem:points}
There exist numbers $\mgot_n\in \n$, $\mgot_n\geq 2$, and $\cgot_n\in\r$, $0<\cgot_n<1/2$, such that the following assertion holds.

For every real number $r>0$ there exist  finite subsets $F_1,\ldots,F_{\mgot_n}$ of       
 $\s^n$ such that:
\begin{enumerate} [\rm (i)]
\item  $|p-q|\geq r$ for all $p,q\in F_j$, $p\neq q$, $j=1,\ldots,\mgot_n$.
\item  If $F:=\bigcup_{j=1}^{\mgot_{n}} F_j$ then $F\neq\emptyset$ and  $\dist(p,F)\leq \cgot_{n} r$ for all $p\in \s^n$. 
\end{enumerate}
\end{lemma}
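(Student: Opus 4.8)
The plan is to establish the existence of $\mgot_n$ and $\cgot_n$ by a covering/packing argument on the sphere $\s^n$, using the fact that the sphere is a compact metric space that is "doubling" in a uniform way. First I would fix, once and for all, a maximal $r$-separated subset $G\subset\s^n$; by maximality $G$ is also an $r$-net, i.e.\ $\dist(p,G)<r$ for every $p\in\s^n$. This single set $G$ already does almost everything required in (ii) (with $\cgot_n$ close to $1$), but it fails (i): although any two \emph{distinct} points of $G$ are at distance $\geq r$, we have not split $G$ into the prescribed number $\mgot_n$ of pieces, and more importantly we want the net constant $\cgot_n<1/2$, which a single $r$-separated net cannot guarantee. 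The idea therefore is to take a \emph{finer} net and then color it.

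So the second step is: apply the same construction at scale $\rho:=\cgot_n r$ for a constant $\cgot_n<1/2$ to be chosen, obtaining a maximal $\rho$-separated set $G_\rho\subset\s^n$, which is automatically a $\rho$-net, hence $\dist(p,G_\rho)\leq\cgot_n r$ for all $p\in\s^n$ — this is exactly condition (ii) with $F:=G_\rho$. The third step is to partition $G_\rho$ into $\mgot_n$ subsets $F_1,\dots,F_{\mgot_n}$ each of which is $r$-separated, i.e.\ a proper graph coloring: form the graph on vertex set $G_\rho$ joining two points when their distance is $<r$, and observe that this graph has bounded degree. Indeed, a ball of radius $r$ on $\s^n$ can contain only boundedly many points that are pairwise $\geq\rho=\cgot_n r$ apart — a standard volume (packing) estimate on the sphere gives a bound $\Delta=\Delta(n)$ depending only on $n$ (essentially $(1+2/\cgot_n)^{n}$ up to the curvature correction, which is harmless since all the balls involved have radius $\leq 2$). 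A graph of maximum degree $\Delta$ is $(\Delta+1)$-colorable by the greedy algorithm, so we may take $\mgot_n:=\Delta(n)+1$, and each color class $F_j$ is $r$-separated, giving (i). Setting $F=\bigcup_j F_j=G_\rho$ then yields (ii), and $F\neq\emptyset$ because $\s^n\neq\emptyset$.

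There is one subtlety in the order of quantifiers: $\mgot_n$ (through $\Delta(n)$) depends on the chosen constant $\cgot_n$, while $\cgot_n$ was only required to be some number in $(0,1/2)$. This is not circular — I would simply fix $\cgot_n:=1/3$ (say) \emph{first}, then let $\Delta(n)$ be the resulting packing bound and $\mgot_n:=\Delta(n)+1$; both constants then depend only on $n$, and the construction above works for every $r>0$ simultaneously, with the nets rescaled by $r$. One should also check $\mgot_n\geq 2$: this holds automatically as soon as $\s^n$ contains two points within distance $<r$ of each other, which for $r$ large is clear and for $r$ small forces $G_\rho$ to have at least two points at mutual distance $<r$ once $\rho<r$, so the coloring graph has at least one edge and two colors are genuinely used (or, if not, one may harmlessly enlarge $\mgot_n$).

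\textbf{Main obstacle.} The only real work is the spherical packing estimate that bounds the degree $\Delta(n)$ of the conflict graph independently of $r$; everything else (maximal separated sets exist by Zorn's lemma or compactness, maximality gives the net property, bounded-degree graphs are greedily colorable) is routine. The packing bound itself is standard — compare volumes of geodesic balls of radius $\sim\rho$ packed into a geodesic ball of radius $\sim r$ — but one must be slightly careful that the comparison is uniform in $r$; since we only ever deal with subsets of the \emph{unit} sphere, all radii are bounded by the diameter $\pi$ (or $2$ in the chordal metric), so the curvature distortion between Euclidean and geodesic distance is bounded, and the estimate goes through with a constant depending on $n$ alone.
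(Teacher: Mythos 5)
Your argument is correct, but it takes a genuinely different route from the paper. You prove the lemma in one stroke for every $n$: fix $\cgot_n=1/3$, take a maximal $(\cgot_n r)$-separated set $G_\rho\subset\s^n$ (finite by compactness, and automatically a $(\cgot_n r)$-net by maximality, which is exactly (ii)), and then greedily color the conflict graph joining points of $G_\rho$ at distance $<r$; the chromatic bound $\mgot_n=\Delta(n)+1$ comes from a cap-packing estimate that is uniform in $r$ because all radii involved are capped by the diameter of $\s^n$. The paper instead argues by induction on the dimension: the case $n=1$ is done by explicit rotations of the circle (with $\mgot_1=10$, $\cgot_1=1/3$), and the inductive step slices $\s^n$ into latitude spheres $\s^n\cap\Pi_t$, applies the inductive hypothesis on each slice, and assembles the sets $F_{j,k}$ from slices at heights separated by multiples of a carefully chosen angle, yielding $\mgot_n=(\mu+1)\mgot_{n-1}$. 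Your route is shorter and more conceptual, at the price of invoking the (standard, but not entirely free) uniform volume-packing bound, which you rightly single out as the only real work; the paper's route is longer but completely elementary and explicit, with every constant extracted from trigonometric identities and no appeal to volumes or graph coloring. Both correctly produce $\mgot_n$ and $\cgot_n$ depending only on $n$: your remark that $\cgot_n$ is fixed \emph{before} the degree bound is computed resolves the quantifier order, and allowing empty color classes (which the paper also explicitly permits) disposes of the cosmetic requirement $\mgot_n\geq 2$.
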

We emphasize that the numbers $\mgot_n$ and $\cgot_n$ in the lemma only depend on $n$, the dimension of the sphere. Possibly some of the $F_j$'s are empty. If $r> 2={\rm diam}(\s^n)$, condition {\rm (i)} implies that $F_j$ consists of at most one point for all $j\in\{1,\ldots,\mgot_n\}$. 

Let us outline how Theorem \ref{th:infinity} will follow from Lemma \ref{lem:points}. We will pick any sequence of numbers $0<s_0=\lambda_0<s_1<\cdots<\lim_{j\to+\infty} s_j=1$ such that $\sum_{j\in\n}\sqrt{s_j-s_{j-1}}=+\infty$; here $\lambda_0$ is given in the statement of the theorem. For each $j\in\n$, we will consider the numbers $s_{j,k}:=s_{j-1}+k\frac{s_j-s_{j-1}}{\mgot_n+1}$, $k=1,\ldots,\mgot_n+1$ (hence $s_{j-1}<s_{j,k}\leq s_j$), and will take $r_j>0$ such that the tangent ball $\oB(p,r_j)$ is contained in $s_{j,k+1}\b$ for all $p\in s_{j,k}\s^n$, $k=1,\ldots,\mgot_n$. Basic trigonometry gives that $r_j$ can be taken to be larger than $a\sqrt{s_j-s_{j-1}}$ for some constant $a>0$ which does not depend on $j$. We will then apply Lemma \ref{lem:points} for $r=2r_j$ and obtain subsets $F_{j,1},\ldots,F_{j,\mgot_n}\subset \s^n$, $j\in\n$.  Lemma \ref{lem:points} {\rm (i)} and the choice of $r_j$ will ensure that the collection of tangent balls $\Fgot=\bigcup_{j\in\n} \big(\bigcup_{k=1}^{\mgot_n}\Fgot_{j,k}\big)$, where $\Fgot_{j,k}=\{s_{j,k}\oB(p,r_j)\colon p\in F_{j,k}\}$ for all $j\in\n$ and $k\in\{1,\ldots,\mgot_n\}$, is tidy. On the other hand, Lemma \ref{lem:points} {\rm (ii)} will guarantee the existence of a constant $c>0$, which does not depend on $j$, such that for any $p\in\s^n$ the spherical ball $(p+cr_j\overline\b)\cap\s^n$ is contained in $\pgot(T)$ for some $T\in\Fgot_j:=\bigcup_{k=1}^{\mgot_n}\Fgot_{j,k}$. This implies that the length of every path $\gamma_j\colon[0,1]\to s_j\overline\b\setminus s_{j-1}\b$ such that $|\gamma_j(0)| = s_{j-1}$, $|\gamma_j(1)|=s_j$, and $\gamma_j([0,1])\cap|\Fgot_j|=\emptyset$, is at least $s_{j-1}c r_j> a s_0c\sqrt{s_j-s_{j-1}}$. (See Lemma \ref{lem:r1r2}.) Finally, since $a$, $s_0$, and $c$ do not depend on $j\in\n$, the choice of the $s_j$'s will ensure that every path $\gamma\colon[0,1[\to\b$ such that 
$\lim _{t\rightarrow 1}|\gamma(t)| = 1$ and $\gamma([0,1[)\cap|\Fgot|=\emptyset$, has infinite length. Thus, the collection $\Fgot$ of tangent balls will prove Theorem \ref{th:infinity}.

Before starting with the proof of Lemma \ref{lem:points} observe the following 
\begin{claim} \label{cla:t}
Assume that  Lemma \ref{lem:points} holds.  Then the same statement is valid with the same numbers $\mgot_{n}\in\n$ and $0<\cgot_{n}<1/2$ if we replace $\s^{n}$ by $t\s^{n}=\{tp\colon p\in\s^{n}\}$ for any $t>0$. 
\end{claim}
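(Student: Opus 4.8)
The statement to prove is Claim \ref{cla:t}: if Lemma \ref{lem:points} holds for $\s^n$, then the same holds for $t\s^n$ with the same constants $\mgot_n$ and $\cgot_n$.

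\textbf{Proof proposal.}

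The plan is to exploit the fact that $t\s^n$ is simply a rescaling of $\s^n$ by the dilation $x\mapsto tx$, and that Lemma \ref{lem:points} is scale-covariant in the right way: its hypothesis ``for every $r>0$'' already quantifies over all scales, and conditions (i)--(ii) involve only Euclidean distances, which scale linearly. So the whole content of the claim is that the constants $\mgot_n$ and $\cgot_n$ can be kept unchanged under this rescaling.

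First I would fix $t>0$ and an arbitrary $r>0$, for which we must produce finite subsets $F_1,\dots,F_{\mgot_n}\subset t\s^n$ satisfying (i) with separation $r$ and (ii) with constant $\cgot_n$. The idea is to apply Lemma \ref{lem:points} on the unit sphere $\s^n$ with the rescaled radius $r/t$: this yields finite subsets $F_1',\dots,F_{\mgot_n}'\subset\s^n$ with $|p'-q'|\ge r/t$ for distinct $p',q'\in F_j'$, and with $F':=\bigcup_j F_j'\ne\emptyset$ and $\dist(p',F')\le\cgot_n\,(r/t)$ for all $p'\in\s^n$. Now set $F_j:=t F_j'=\{tp':p'\in F_j'\}\subset t\s^n$.

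Then I would verify the two conditions by unwinding the dilation. For (i): if $p=tp'$ and $q=tq'$ are distinct points of $F_j$, then $|p-q|=t\,|p'-q'|\ge t\cdot(r/t)=r$, as required. For (ii): given any $p\in t\s^n$, write $p=tp'$ with $p'\in\s^n$; since $F=\bigcup_j F_j=tF'$, we have $\dist(p,F)=t\,\dist(p',F')\le t\cdot\cgot_n(r/t)=\cgot_n r$. Also $F=tF'\ne\emptyset$ because $F'\ne\emptyset$. This establishes the claim with the very same $\mgot_n$ and $\cgot_n$, since these were never touched — only the auxiliary radius was rescaled from $r$ to $r/t$.

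There is essentially no obstacle here; the only point requiring a moment's care is to apply the lemma on $\s^n$ with the correctly rescaled radius $r/t$ rather than $r$, and to note that the separation and covering estimates transform by the factor $t$ in exactly compatible ways, leaving the dimensional constants fixed. (The same remark, with the same proof, would apply to any other rescaling or isometry of the sphere, but only the dilation is needed for the construction outlined above.)
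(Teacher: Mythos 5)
Your proposal is correct and follows exactly the paper's argument: apply Lemma \ref{lem:points} on $\s^n$ with radius $r/t$ and dilate the resulting sets by $t$, noting that both the separation condition (i) and the covering condition (ii) scale linearly under the dilation. The paper states this in two sentences; your version merely spells out the same verification in more detail.
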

\begin{proof}
Pick $t, r>0$. Since we are assuming that the lemma holds for $\s^{n}$, there exist $\mgot_{n}$ subsets $C_1, \ldots, C_{\mgot_{n}}$ of $\s^{n}$ satisfying {\rm (i)} and {\rm (ii)} for the real number $r/t>0.$  Therefore, the subsets $t C_1, \ldots, t C_{\mgot_{n}}$ of $t \s^{n}$ meet {\rm (i)} and {\rm (ii)} for the real number $r$, proving the claim.
\end{proof}

\begin{proof}[Proof of Lemma \ref{lem:points}]
We proceed by induction on $n$, the dimension of the sphere. 

The basis of the induction $(n=1)$ admits several proofs. The one we include here will help the reader to understand the main point of the induction step. We let $\mgot_1=10$ and $\cgot_1=1/3$ (these numbers are not sharp but fit well with the argument in the inductive step). Choose $r>0$. It suffices to find ten subsets of $\s^1$ satisfying conditions {\rm (i)} and {\rm (ii)}. 
We distinguish cases.

Assume $r\geq 2={\rm diam}(\s^1)$. Set $F_j:=\{e^{\imath (j-1) \frac{\pi}5}\}\subset\s^1\subset\c$, $1\leq j\leq 10$. Condition {\rm (i)} trivially holds since all the $F_j$'s are unitary, whereas {\rm (ii)} follows from the equation
\begin{equation}\label{eq:distangle}
|x-x e^{\imath\sigma}|=|1-e^{\imath\sigma}|=2\sin\big(\frac{\sigma}2\big)\quad \text{for all $x\in\s^1\subset\c$, $\sigma\in[0,2\pi]$}.
\end{equation}
Indeed, given $p=e^{\imath t}\in\s^1$, $t\in[0,2\pi]$, there exists $j_0\in\{1,\ldots,10\}$ such that $|t-(j_0-1) \frac{\pi}5|<\frac{\pi}5$, and so
\[
\dist\Big(p,\bigcup_{j=1}^{10} F_j \Big)  \leq  \dist(p,F_{j_0})
\leq \big|p-pe^{\imath ((j_0-1) \frac{\pi}5-t)}\big|\leq  2\sin\big(\frac{\pi}{10}\big) < \frac23 \leq \cgot_1r;
\]
take into account that the sinus in increasing in $[0,\frac{\pi}2]$.

Assume now  $0<r<2$. In this case \eqref{eq:distangle} ensures that
\begin{equation}\label{eq:alphabasis}
|x-x e^{\imath \sigma}|\geq r,\quad x\in\s^1, \quad \text{for all $\sigma\in[\alpha,\pi]$,}
\end{equation}
where $\alpha:=2 \arcsin(r/2)\in]0,\pi[$. Set $\beta:=2 \arcsin(r/6)$ and observe that 
\begin{equation}\label{eq:betabasis}
0<\beta<\alpha<5\beta.
\end{equation}
Moreover, since the sinus is increasing in $[0,\frac{\pi}2]$, \eqref{eq:distangle} ensures that
\begin{equation}\label{eq:distangle2}
|x-x e^{\imath \sigma}|\leq \frac{r}3=\cgot_1r,\quad x\in\s^1,\quad \text{for all $\sigma\in[0,\beta]$}.
\end{equation}
For each $j\in\{1,\ldots,5\}$, consider the subset
\begin{equation}\label{eq:Fx}
F_j:=\{-\imath e^{\imath (j-1)\beta} e^{\imath 5(k-1)\beta}\colon k\in\n,\; ((j-1)+5(k-1))\beta\leq\pi\}\subset\s^1 
\end{equation}
generated by rotating the point $-\imath e^{\imath (j-1)\beta}$ by angles which are multiple of $5\beta$.  (See Fig.\ \ref{fig:basis}.)
Combining \eqref{eq:alphabasis} and \eqref{eq:betabasis} we obtain that $F_j$
has the property that $|p-q|\geq r$ if $p,q\in F_j$, $p\neq q$, $j\in\{1,\ldots,5\}$. This simply means that $F_j$ satisfies condition {\rm (i)} in the lemma, $j=1,\ldots,5$. The same holds for $F_{-j} := \{-\overline z\colon z\in F_j\}$, $j=1,\ldots,5$, that is,
\begin{equation}\label{eq:Fx2}
F_{-j} = \{-\imath e^{-\imath (j-1)\beta} e^{-\imath 5(k-1)\beta}\colon k\in\n,\; ((j-1)+5(k-1))\beta\leq\pi\}.
\end{equation}
\begin{figure}[ht]
    \begin{center}
 \scalebox{0.3}{\includegraphics{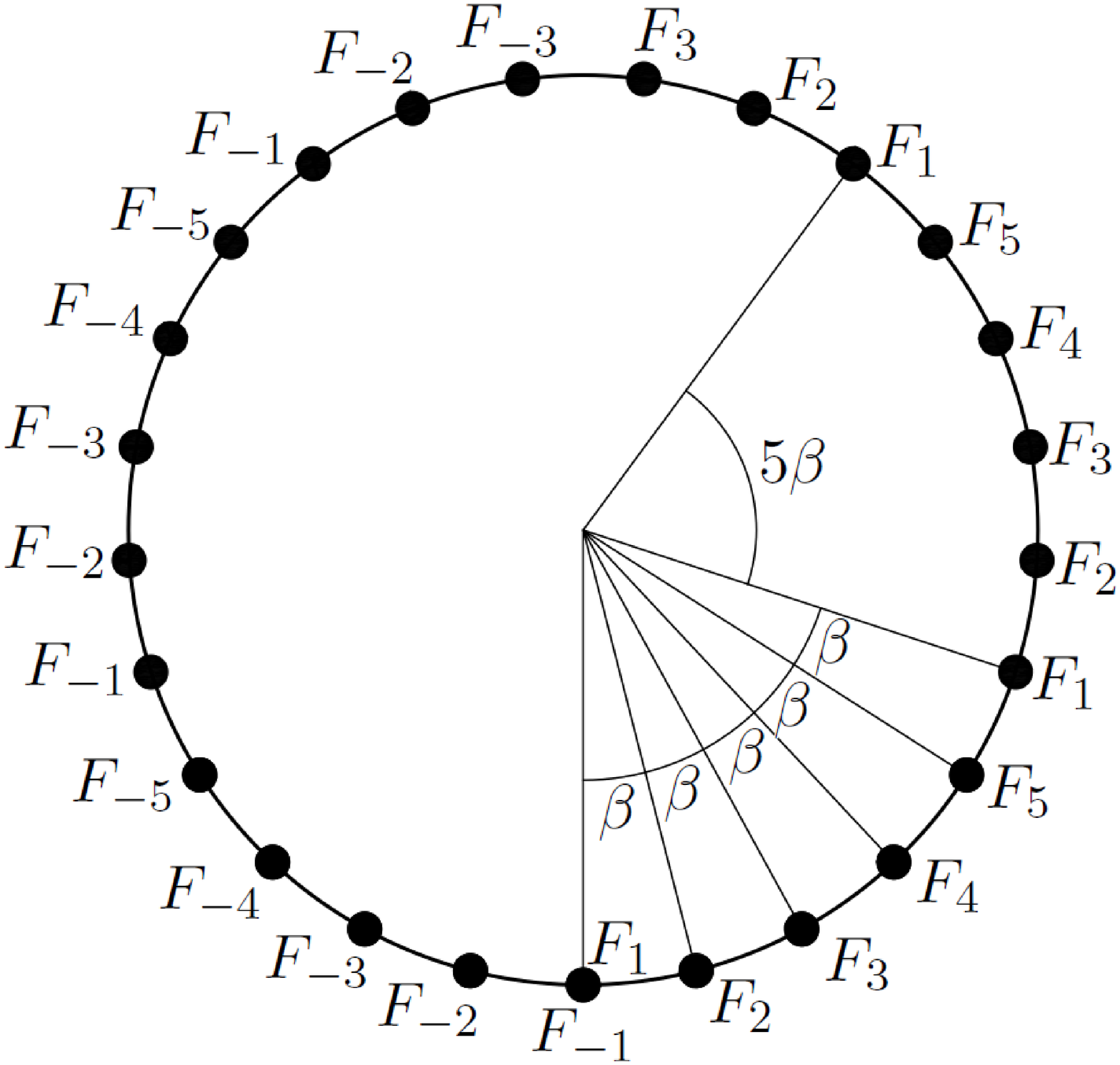}}
     \end{center}
\caption{Basis of the induction for $r=3/4$.}
\label{fig:basis}
\end{figure}

To finish, it suffices to show that $F_1,\ldots,F_5,F_{-1},\ldots, F_{-5}$ also satisfy {\rm (ii)}. 
For that pick $p=-\imath e^{\imath t} \in\s^1$, $t\in[-\pi,\pi]$. Without loss of generality we assume that $t\in[0,\pi]$; otherwise we reason in a symmetric way. It is clear that there exist $j_0\in\{1,\ldots,5\}$ and $k_0\in\n$ such that $((j_0-1)+5(k_0-1))\beta\leq\pi$ and 
$|t-((j_0-1)+5(k_0-1))\beta|\leq\beta$. Thus, since $e^{\imath ((j_0-1)+5(k_0-1))\beta}\in F_{j_0}$, \eqref{eq:distangle2} implies that  
\[
\dist\Big(p,\bigcup_{j=1}^5 (F_j\cup F_{-j}) \Big)  \leq  \dist(p,F_{j_0})
\leq \Big|p-pe^{\imath \big(((j_0-1)+5(k_0-1))\beta-t\big)}\Big| \leq \cgot_1r,
\]
which proves {\rm (ii)} and concludes the basis of the induction.
 
For the inductive step fix $n\in\n$, $n\geq 2$, assume that the lemma holds for $n-1$, and let us prove it for $n$. 

We begin with some preparations. Given $t\in[-\frac{\pi}2,\frac{\pi}2]$ we denote by $\Pi_t$ the affine hyperplane of $\r^{n+1}$ given by 
\[
  \Pi_t:=\{(x_1,\ldots,x_{n+1})\in\r^{n+1}\colon x_{n+1}=\sin t\}.
\]
Notice that 
\begin{equation}\label{eq:snt}
  \s^n\cap \Pi_t=\cos(t)\s^{n-1}\times\{\sin t\}\quad \forall t\in\big[-\frac{\pi}2,\frac{\pi}2\big].
\end{equation}
We adopt the convention $0\s^{n-1}=\{0\}\subset  \r^n$. Obviously, $\s^n=\bigcup_{t\in[-\frac{\pi}2,\frac{\pi}2]}(\s^n\cap\Pi_t)$.

The idea for the proof is similar to the one in the basis of the induction. In this case the role of the points $-\imath e^{\pm\imath (j-1)\beta}$ will be played by finite subsets $A_j^t$, $j=1,\ldots,\mgot_{n-1}$, of the $(n-1)$-dimensional sphere $\s^n\cap\Pi_t$, $t\in]-\frac{\pi}2,\frac{\pi}2[$, which will be provided by the inductive hypothesis; take into account Claim \ref{cla:t}. (See Properties {\rm (P1)}, {\rm (P2)}, and \eqref{eq:Ajt} below.) Likewise, the role of the set $F_j$ will be played by the union of a finite family of sets $A_j^t$ where $j$ is fixed and $t$ moves in sufficiently far heights (compare \eqref{eq:Fjk} below with \eqref{eq:Fx} and \eqref{eq:Fx2}, and Properties {\rm (P3)} and {\rm (P4)} below with \eqref{eq:alphabasis} and \eqref{eq:distangle2}).

So, pick any number $\cgot_n$ with
\begin{equation}\label{eq:cn}
0<\cgot_{n-1}<\cgot_n<\frac12.
\end{equation}

Consider the function $f\colon ]0,+\infty[\to\r$ given by
\begin{equation}\label{eq:f}
f(x)=\frac{\displaystyle \arcsin\big( \min\big\{\frac{x}2,1\big\} \big)}{\displaystyle \arcsin\Big( \min\Big\{\frac{(\cgot_n-\cgot_{n-1})x}2,1 \Big\}\Big)},\quad x>0.
\end{equation}
Observe that $f$ is continuous, positive, $\lim_{x\to 0}f(x)=1/(\cgot_n-\cgot_{n-1})>2$, and $\lim_{x\to+\infty}f(x)=1$, hence $f$ is a bounded function. Set 
\begin{equation}\label{eq:mu}
\mu:= {\rm E}\big(\sup_{x> 0}f(x)\big)\geq 2,
\end{equation}
where ${\rm E}(\cdot)$ means integer part. Obviously $\mu$ only depends on $\cgot_{n-1}$ and $\cgot_n$. Set 
\[
      \mgot_n:=(\mu+1) \mgot_{n-1},
\]
and let us check that the numbers $\mgot_n\in\n$ and $\cgot_n\in]0,1/2[$ satisfy the conclusion of the lemma. For that, choose $r>0$ and let us furnish subsets $F_1,\ldots,F_{\mgot_n}$ of $\s^n$ meeting conditions {\rm (i)} and {\rm (ii)} in the statement of the lemma.

For $t$ in the open interval $]-\frac{\pi}2,\frac{\pi}2[$ and in view of \eqref{eq:snt}, Claim \ref{cla:t} provides subsets $A_1^t,\ldots, A_{\mgot_{n-1}}^t$ of $\s^n\cap\Pi_t$  satisfying the thesis of the lemma for the real number $r>0$. 
 That is:
\begin{enumerate}
\item[\rm ({P}1)]  $A_j^t$ is finite and $|p-q|\geq r$ for all $p, q\in A_j^t$, $p\neq q$, $j=1,\ldots,\mgot_{n-1}$. 
\item[\rm ({P}2)] $A^t:=\bigcup_{j=1}^{\mgot_{n-1}} A_j^t\neq\emptyset$ and $\dist(p,A^t)\leq \cgot_{n-1} r$ for all $p\in \s^n\cap\Pi_t$. 
\end{enumerate}
We also set 
\begin{equation}\label{eq:Ajt}
A_j^{\pm\frac{\pi}2}:=\{(0,\ldots,0,\pm1)\}\subset\s^n, \quad j\in\{1,\ldots,\mgot_{n-1}\}.\end{equation}

To finish the proof, we will distribute a suitable subset of $\bigcup_{(t,j)\in [-\frac{\pi}2,\frac{\pi}2]\times\{1,\ldots,\mgot_{n-1}\}} A_j^t\subset\s^n$ into $\mgot_n$ subsets of $\s^n$ satisfying conditions {\rm (i)} and {\rm (ii)}. 

Observe first that, by basic trigonometry,
\begin{equation}\label{eq:distst}
\dist(p,\s^n\cap\Pi_t)= 2 \sin \Big(\frac{|t-s|}2\Big)\quad \forall p\in \s^n\cap\Pi_s,\; t,s\in\big[-\frac{\pi}2,\frac{\pi}2\big].
\end{equation}
Set
\begin{equation}\label{eq:ab}
\alpha:=2\arcsin\Big( \min\Big\{\frac{r}2,1\Big\} \Big),\quad \beta:=2\arcsin\Big( \min\Big\{\frac{(\cgot_n-\cgot_{n-1})r}2,1 \Big\}\Big).
\end{equation}
In view of \eqref{eq:cn}, we have $0<\beta\leq\alpha\leq\pi$. Moreover, since the sinus is increasing in $[0,\frac{\pi}2]$, \eqref{eq:distst} ensures that:
\begin{enumerate}
\item[\rm ({P}3)] $\dist(\s^n\cap\Pi_s,\s^n\cap\Pi_t)\geq \min\{r,2\}$ for all $t,s\in [-\frac{\pi}2,\frac{\pi}2]$ with $|t-s|\geq \alpha$.
\item[\rm ({P}4)] $\dist(p,\s^n\cap\Pi_t)\leq \min\{(\cgot_n-\cgot_{n-1})r,2\}$ for all $p\in \s^n\cap\Pi_s$, for all $t,s\in[-\frac{\pi}2,\frac{\pi}2]$ with $|t-s|\leq\beta$.
\end{enumerate}
Further, $\alpha/\beta=f(r)$ where $f$ is the function \eqref{eq:f}, and so \eqref{eq:mu} gives that
\begin{equation}\label{eq:mu+1}
(\mu+1)\beta>\alpha.
\end{equation}

Denote by $I:=\{1,\ldots,\mgot_{n-1}\}\times \{0,\ldots,\mu\}$. For all $k\in \{0,\ldots,\mu\}$ call $I_k:=\{l\in\z\colon 0\leq k+l(\mu+1)\leq\pi/\beta\}$, and notice that $I_k$ is a (possibly empty) finite set; see \eqref{eq:mu}.  For $(j,k)\in I$, set
\begin{equation}\label{eq:Fjk}
F_{j,k}:=\bigcup_{l\in I_k} A_j^{-\frac{\pi}2+(k+l(\mu+1))\beta}.
\end{equation}
(See Figure \ref{fig:step}.)
\begin{figure}[ht]
    \begin{center}
 \scalebox{0.3}{\includegraphics{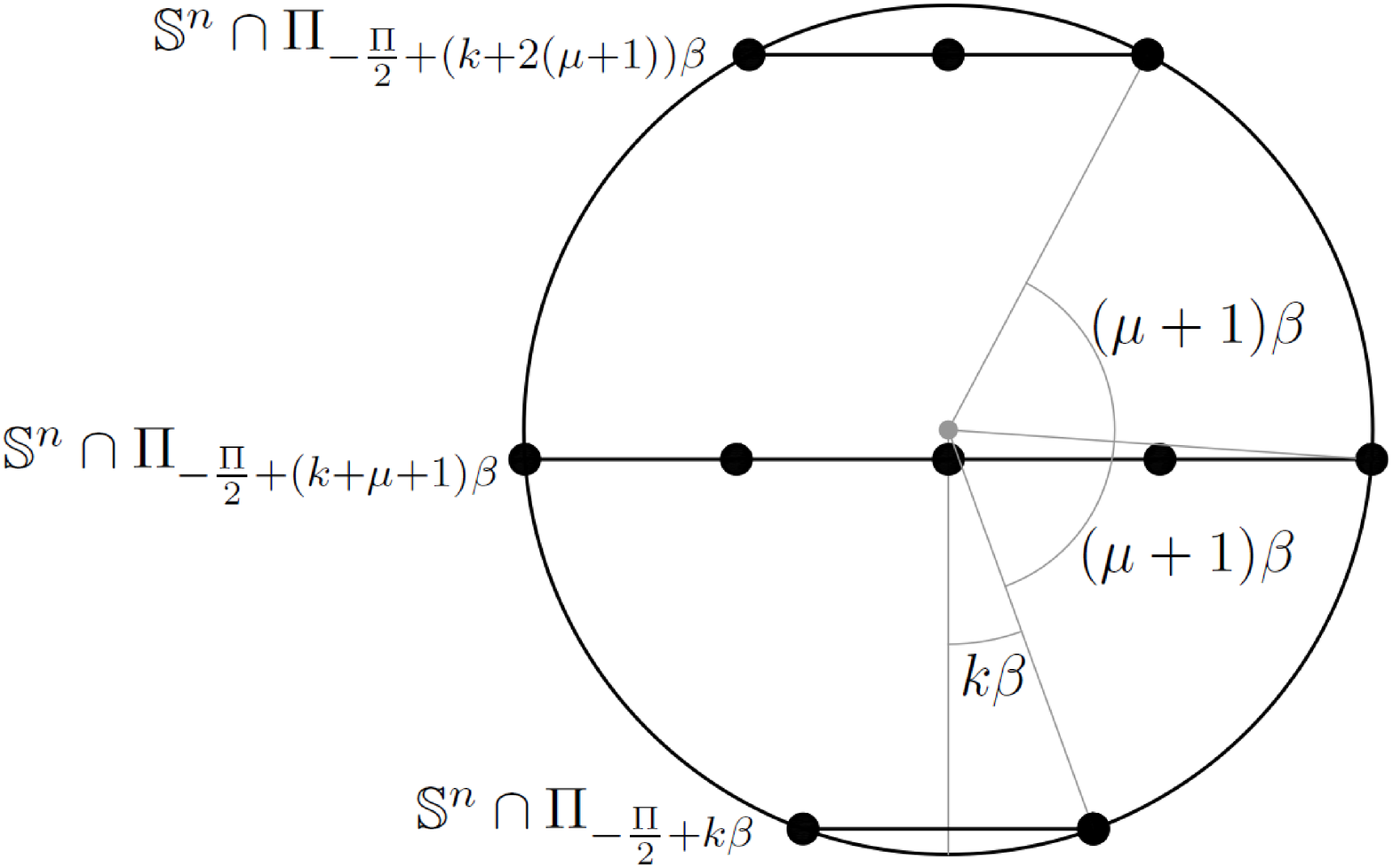}}
     \end{center}
\caption{The subset $F_{j,k}\subset \s^n$.}
\label{fig:step}
\end{figure}
To conclude the proof it suffices to check that the $\mgot_n=(\mu+1) \mgot_{n-1}$  subsets of $\s^n$, 
$F_{j,k}$, $(j,k)\in I$, satisfy Lemma \ref{lem:points}-{\rm (i)},{\rm (ii)}. 

Pick $(j,k)\in I$. Since $I_k$ is finite, {\rm (P1)} and \eqref{eq:Ajt} ensure that the (possibly empty) set $F_{j,k}$ is finite. Suppose that $F_{j,k}\neq\emptyset$ and
choose a pair of points $p,q\in F_{j,k}$. If $p,q\in  A_j^{-\frac{\pi}2+(k+l(\mu+1))\beta}$ for some $l\in I_k$, {\rm (P1)} and \eqref{eq:Ajt} ensure that either $p=q$ or $|p-q|\geq r$. Assume now that $p\in  A_j^{-\frac{\pi}2+(k+l_1(\mu+1))\beta}$ and $q\in  A_j^{-\frac{\pi}2+(k+l_2(\mu+1))\beta}$ with $l_1,l_2\in I_k$, $l_1\neq l_2$. It follows that 
\begin{eqnarray*}
\pi & \geq & \big|-\frac{\pi}2+(k+l_1(\mu+1))\beta-\big(-\frac{\pi}2+(k+l_2(\mu+1))\beta\big)\big|
\\
 & = &  |l_1-l_2|(\mu+1) \beta \; \geq \; (\mu+1) \beta \; >\; \alpha;
\end{eqnarray*}
take into account  \eqref{eq:mu+1}. This and \eqref{eq:ab} imply $r<2$, hence {\rm (P3)} guarantees that $|p-q|\geq\min\{r,2\}=r$. Condition {\rm (i)} follows. 

To check {\rm (ii)} set $F:=\bigcup_{(j,k)\in I} F_{j,k}$. Since $\emptyset\neq A_j^{-\frac{\pi}2}\subset F_{j,0}\subset F$,  $F\neq\emptyset$. Pick a point $p\in\s^n$ and let $s\in[-\frac{\pi}2,\frac{\pi}2]$ be the unique number satisfying $p\in\s^n\cap\Pi_s$. Since $0<\beta\leq\pi$, there exist  $k_0\in\{0,\ldots,\mu\}$ and  $l_0\in I_{k_0}$  such that $|t_0-s|\leq\beta$, where  $t_0:=-\frac{\pi}2+(k_0+l_0(\mu+1))\beta\in[-\frac{\pi}2,\frac{\pi}2]$. Property {\rm (P4)} provides $q\in \s^n\cap\Pi_{t_0}$ such that $|p-q|\leq \min\{(\cgot_n-\cgot_{n-1})r,2\}$. Together with {\rm (P2)} and \eqref{eq:Ajt}, we obtain that 
\begin{eqnarray*}
\dist\big(p,\bigcup_{j=1}^{\mgot_{n-1}} A_j^{t_0}\big) & \leq & |p-q|+\dist\big(q,\bigcup_{j=1}^{\mgot_{n-1}} A_j^{t_0}\big)
\\
& \leq & \min\{(\cgot_n-\cgot_{n-1})r,2\}+\cgot_{n-1}r \; \leq \; \cgot_nr.
\end{eqnarray*}
Since $\bigcup_{j=1}^{\mgot_{n-1}} A_j^{t_0} \subset \bigcup_{j=1}^{\mgot_{n-1}} F_{j,k_0}\subset F$, the above inequality proves {\rm (ii)}. This completes the proof.
\end{proof}

%%%
%%%
%%%

With Lemma \ref{lem:points} in hand, we may find finite tidy collections of tangent balls in a spherical shell, which are suitable for our purposes.

\begin{lemma}\label{lem:r1r2}
Given $n\in \n$, there exists a number $\agot_n>0$ such that the following holds:

For any $0<r_1<r_2<1$ there is a finite tidy collection  $\Fgot$ of tangent balls in $r_2 \b$, contained in $r_2\b\setminus r_1\overline \b$, such that 
if $\gamma\colon [0,1]\to r_2\overline \b\setminus r_1\b$ is a path such that $|\gamma(0)|=r_1$, $|\gamma(1)|=r_2$, and $\gamma([0,1])\cap|\Fgot|=\emptyset$, then the length of $\pgot\circ \gamma$ is at least  $\displaystyle \agot_n  \frac{\sqrt{r_2-r_1} }{\sqrt{r_2}}$. In particular, the length of such $\gamma$ is at least  $\displaystyle \agot_n  \frac{r_1\sqrt{r_2-r_1} }{\sqrt{r_2}}$.
\end{lemma}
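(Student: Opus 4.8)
The plan is to build the collection $\Fgot$ directly by stacking finitely many tangent balls inside a sequence of thin spherical shells that together fill up $r_2\overline\b\setminus r_1\b$, exactly as in the outline preceding Lemma \ref{lem:points}. Concretely, I would partition $[r_1,r_2]$ using intermediate radii $\rho_k := r_1 + k\frac{r_2-r_1}{\mgot_n+1}$ for $k=0,1,\dots,\mgot_n+1$, so that $\rho_0=r_1$ and $\rho_{\mgot_n+1}=r_2$. Basic trigonometry shows there is $\rho>0$, proportional to $\sqrt{(r_2-r_1)/(\mgot_n+1)}$ (up to the curvature of the sphere, which is controlled since all radii lie in $]0,1[$), such that the tangent ball $\oB(p,\rho)$ with $p\in\rho_k\s^n$ is contained in $\rho_{k+1}\b$; hence I can take $\rho \ge b\sqrt{r_2-r_1}$ for a constant $b>0$ depending only on $n$. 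Then I apply Lemma \ref{lem:points} (via Claim \ref{cla:t}, rescaling to the sphere $\rho_k\s^n$) with $r=2\rho$ to obtain, for each $k\in\{1,\dots,\mgot_n\}$, subsets $F_{k,1},\dots,F_{k,\mgot_n}\subset\rho_k\s^n$, and I set $\Fgot_{k,j}:=\{\oB(p,\rho): p\in F_{k,j}\}$ and $\Fgot:=\bigcup_{k,j}\Fgot_{k,j}$.

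Next I would verify that $\Fgot$ is a finite tidy collection contained in $r_2\b\setminus r_1\overline\b$. Finiteness and containment in the shell are immediate from the construction and from $\rho_0=r_1$. For tidiness: all balls tangent to a given sphere $\rho_k\s^n$ have the same radius $\rho$ and are pairwise disjoint because by Lemma \ref{lem:points}{\rm (i)} distinct centres in any one $F_{k,j}$ are at distance $\ge 2\rho$, while centres from different $F_{k,j}$'s, being points of the same sphere, also cannot be closer than needed once one checks the elementary chordal estimate; and balls tangent to $\rho_{k}\s^n$ lie inside $\rho_{k+1}\b\subset\rho_{k'}\b$ for any $k'>k$, which gives the nesting condition. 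The first tidiness bullet is trivial since $\Fgot$ is finite.

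The heart of the argument is the length lower bound. Let $\gamma\colon[0,1]\to r_2\overline\b\setminus r_1\b$ be a path with $|\gamma(0)|=r_1$, $|\gamma(1)|=r_2$, avoiding $|\Fgot|$. Because $|\gamma|$ runs continuously from $r_1$ to $r_2$, for each $k\in\{1,\dots,\mgot_n\}$ the path hits the sphere $\rho_k\s^n$; pick a time $t_k$ with $|\gamma(t_k)|=\rho_k$, ordered $t_1<\dots<t_{\mgot_n}$ by reparametrising if necessary (or just use that the first hitting times are monotone since consecutive spheres are nested). Now fix $k$ and consider the radial projection $\pgot\circ\gamma$ restricted to $[t_k,t_{k+1}]$. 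By Lemma \ref{lem:points}{\rm (ii)} applied on $\rho_k\s^n$ (rescaled), every point of $\s^n$ lies within spherical distance $O(\rho/\rho_k)$ of some centre, so the spherical cap of radius $\sim c\rho/\rho_k$ around the projected point $\pgot(\gamma(t_k))$ is contained in $\pgot(T)$ for some $T\in\Fgot_k$; since $\gamma$ avoids $T$, the curve $\pgot\circ\gamma$ must travel spherical distance at least $\sim c\rho/\rho_k \ge c\rho/r_2 \ge (bc)\sqrt{r_2-r_1}/\sqrt{r_2}$ before time $t_{k+1}$, hence between $t_k$ and $t_{k+1}$. This already delivers, on a single shell, a lower bound of the desired shape $\agot_n\sqrt{r_2-r_1}/\sqrt{r_2}$ for the spherical length of $\pgot\circ\gamma$ (one shell suffices; the remaining shells only help). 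Finally, to pass from the spherical length of $\pgot\circ\gamma$ to the Euclidean length of $\gamma$ itself, I use that $|\gamma|\ge r_1$ throughout, so $|\gamma'| \ge |\gamma|\,|(\pgot\circ\gamma)'| \ge r_1|(\pgot\circ\gamma)'|$ pointwise, whence $\mathrm{length}(\gamma)\ge r_1\,\mathrm{length}(\pgot\circ\gamma)\ge \agot_n\, r_1\sqrt{r_2-r_1}/\sqrt{r_2}$.

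The main obstacle I anticipate is making the trigonometric comparisons uniform in $(r_1,r_2)$: both the relation $\rho\ge b\sqrt{r_2-r_1}$ (which degrades as $r_2\to1$ because a tangent ball of given radius on $\rho_k\s^n$ sticks out less radially when the sphere is larger) and the conversion from "$\gamma$ avoids the cap $\pgot(T)$" to a genuine lower bound on arclength of $\pgot\circ\gamma$ must produce constants depending only on $n$, not on $r_1,r_2$. Keeping all radii bounded away from $0$ by $r_1$ and bounded by $1$ is what saves the estimate, and the cleanest way to organise it is to isolate, as a preliminary sublemma, the purely Euclidean statement relating the radius of a tangent ball $\oB(p,\rho)$ to the thickness of the shell between $|p|\s^n$ and the smallest sphere containing $\oB(p,\rho)$; everything else is then bookkeeping with Lemma \ref{lem:points}.
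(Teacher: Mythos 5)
Your overall strategy (stacking tangent balls produced by Lemma \ref{lem:points} on intermediate spheres subdividing the shell $r_2\overline\b\setminus r_1\b$, then running a trapping argument in a cone) is the right one, but the construction of $\Fgot$ as you describe it does not yield a tidy collection, and the sentence where you dismiss this is exactly where the proof breaks. For each fixed $k$ you place \emph{all} $\mgot_n$ sets $F_{k,1},\dots,F_{k,\mgot_n}$ on the \emph{same} sphere $\rho_k\s^n$ and assert that centres from different $F_{k,j}$'s ``cannot be closer than needed'' by an elementary chordal estimate. No such estimate exists: Lemma \ref{lem:points} guarantees the separation $|p-q|\ge r=2\rho$ only \emph{within} each single $F_{k,j}$, while it forces the union $\bigcup_j F_{k,j}$ to be $\cgot_n r$-dense with $\cgot_n<1/2$, i.e.\ $(<\rho)$-dense. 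A subset of a sphere cannot be both $2\rho$-separated and $(<\rho)$-dense, so centres belonging to different families are in general far closer than $2\rho$ (in the explicit $n=1$ construction, adjacent points of $F_1$ and $F_2$ lie at chordal distance $r/3$), and two tangent balls of radius $\rho$ attached to the same sphere at points that close do intersect. This violates the second condition of Definition \ref{def:tidy} (balls whose centres have equal norm must be pairwise disjoint), so your $\Fgot$ is not tidy and the lemma is not established.

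The repair is the allocation actually used in the paper: the family index and the sphere index must coincide. One attaches the balls coming from $F_j$ \emph{only} to the $j$-th intermediate sphere $s_j\s^n$, $j=1,\dots,\mgot_n$, so that all balls tangent to a common sphere come from a single family and are pairwise disjoint by Lemma \ref{lem:points}{\rm(i)}; the nesting condition of tidiness is then arranged by choosing the common radius so that the balls attached to $s_j\s^n$ avoid $s_{j-1}\overline\b$, which is what forces the radius to be of order $\sqrt{(r_2-r_1)/((\mgot_n+1)r_2)}$ and produces the constant $\agot_n$. This correction also changes the length argument in a way you should note: with one family per sphere you can no longer say ``one shell suffices,'' because property {\rm(ii)} of Lemma \ref{lem:points} only tells you that a neighbourhood of $\pgot(\gamma(0))$ is covered by the projection of a ball from \emph{some} family $F_j$, and you do not know which $j$; the path must therefore be allowed to traverse the whole range from $r_1$ to $r_2$ so that, whatever $j$ turns out to be, it is forced to cross the sphere carrying $F_j$ and hence to meet that ball unless its projection has moved by at least $\frac12(\frac12-\cgot_n)r$. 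The remaining ingredients of your write-up (the cone-separation argument and the pointwise inequality $|\gamma'|\ge|\gamma|\,|(\pgot\circ\gamma)'|$ giving the factor $r_1$) are sound and agree with the paper.
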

\begin{proof}
By Lemma \ref{lem:points} there are $\mgot_n\in\n$, $\mgot_n\geq 2$, and $\cgot_n\in\r$, $0<\cgot_n<1/2$, such that 
given $r>0$ there are finite sets $F_1,\ldots,F_{\mgot_n}\subset \s^n$ satisfying: 
\begin{enumerate}[{\rm (i)}]
\item $|p-q|\geq r$ for all $p$, $q\in F_j$, $p\neq q$, $1\leq j \leq \mgot_n$.
\item If $F=\bigcup_{j=1}^{\mgot_n} F_j$ then for every $p\in \s^n$ we have $\dist (p,F)\leq \cgot_n r$.
\end{enumerate}
Fix $r>0$ and let $F_j$, $1\leq j\leq \mgot_n$, be as above.

By {\rm (i)}, given $p\in \s^n$ there is a $q\in F$ such that $|q-p|\leq \cgot_n r$. So, if $y\in \s^n$, $|y-p|\leq (1/2-\cgot_n) r$, then $|y-q|\leq |y-p|+|p-q|\leq  (1/2-\cgot_n) r +  \cgot_n r=\frac{r}{2}$. Thus, for every $p\in \s^n$ there are $j$, $1\leq j\leq \mgot_n$, and $q\in F_j$ such that 
\begin{equation}\label{eq:1st}
\big(p+(1/2-\cgot_n)r \overline \b\big)\cap \s^n\subset \big( q+\frac{r}{2} \overline \b \big) \cap \s^n.
\end{equation}
For each $p\in \s^n$ denote by $\oB(p)$ the tangent ball $\oB((1-\delta)p,\eta)$, where $0<\delta<1$ and $\eta>0$ are chosen so that this ball is attached to $\s^n$ along $(p+\frac{r}{2} \s^n)\cap \s^n$, that is to say, the boundary
\[
b\big( \oB(p) \big) = \big( p+\frac{r}{2} \s^n \big) \cap \s^n;
\]
see Figure \ref{fig:collection}.
This simply means that $(1-\delta)^2+\eta^2=1$ and $\eta^2+\delta^2=(\frac{r}{2})^2,$ which implies that $\frac{r}{2}=\sqrt{2 \delta}$.

Note that
\[
\text{$\oB(p)\cap \oB(q)=\emptyset$ whenever $p,$ $q\in \s^n$, $|p-q|\geq r$,}
\]
and that the projection to $\s^n$
\[
\pgot(\oB(p))=\big(p+\frac{r}{2} \overline \b\big)\cap \s^n.
\]
Now, let $0<r_1<r_2<1$. Divide the interval $[r_1,r_2]$ into $\mgot_n+1$ equal pieces of length 
\[
\omega=\frac{r_2-r_1}{\mgot_n+1}
\]
and let $s_j=r_1+j \omega$, $0\leq j\leq \mgot_n+1$, so that 
\[
r_1=s_0<s_1<\cdots<s_{\mgot_n}<s_{\mgot_n+1}=r_2.
\]
We now describe how to get our tidy collection of tangent balls.

Given $j$, $1\leq j\leq \mgot_n$, let $G_j$ be the collection $\{\oB(p)\colon p\in F_j\}$. This is a collection of pairwise disjoint tangent balls whose boundaries are contained in $\s^n$ and whose centers are on $(1-\delta) \s^n$. Now form $s_j G_j$ by multiplying each ball in $G_j$ by $s_j$ and thus pulling it inside $\b$; so $s_j G_j=\{s_j \oB(p)\colon p\in F_j\}$. Observe that $s_j G_j$ is a collection of pairwise disjoint tangent balls with centers on $s_j (1-\delta) \s^n$ and with boundaries contained in $s_j \s^n$.

Consider now the collection $\Fgot$ of all tangent balls obtained in this way (see Figure \ref{fig:collection}), that is:
\[
\Fgot=\bigcup_{j=1}^{\mgot_n} s_j G_j.
\]
\begin{figure}[ht]
    \begin{center}
 \scalebox{0.4}{\includegraphics{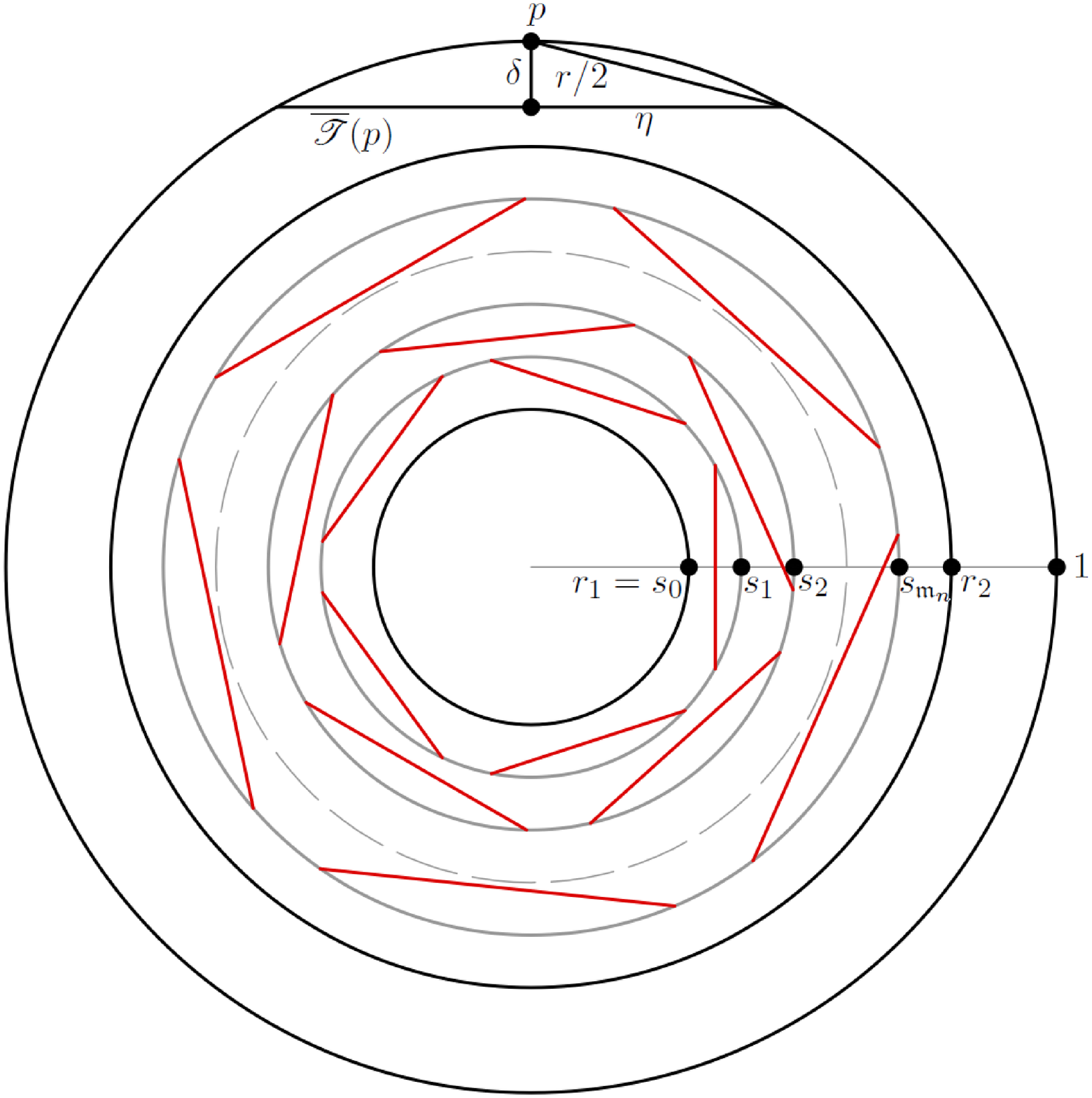}}
     \end{center}
\caption{The collection $\Fgot=\bigcup_{j=1}^{\mgot_n} s_j G_j$.}
\label{fig:collection}
\end{figure}
The familly $\Fgot$ will be a tidy collection of tangent balls in $r_2  \b$ contained in $r_2 \b\setminus r_1\overline \b$ provided that for each $j$, $2\leq j\leq \mgot_n$, the sphere containing the centers of the balls in $s_j G_j$, that is, $(1-\delta) s_j \s^n$, is outside the ball $s_{j-1} \overline \b$ which contains the balls of $s_{j-1} G_{j-1}$, that is, provided that 
\begin{equation}\label{eq:2nd}
s_j (1-\delta)>s_{j-1}\quad  \text{for all $2\leq j\leq \mgot_n$},
\end{equation}
and since we do not want the balls to meet $r_1 \overline\b=s_0\overline \b$, \eqref{eq:2nd} will have to hold for $j=1$ as well.
Now, \eqref{eq:2nd} for $1\leq j\leq \mgot_n$ means that 
\begin{equation} \label{eq:3rd}
\delta<\frac{s_j-s_{j-1}}{s_j}=\frac{\omega}{s_j},\quad 1\leq j\leq \mgot_n.
\end{equation}
Since $s_j\leq r_2$, $ 1\leq j\leq \mgot_n$, \eqref{eq:3rd} certainly holds if we choose 
\[
\delta:=\frac{\omega}{r_2}=\frac{r_2-r_1}{(\mgot_n+1) r_2}.
\]
Recall that $\frac{r}{2}=\sqrt{2\delta}$. Set
\begin{equation} \label{eq:4th}
r:=2\sqrt{ 2 \frac{r_2-r_1}{(\mgot_n+1) r_2}}
\end{equation}
at the outset and obtain $\Fgot$ as above. Then $\Fgot$ is a tidy collection of balls contained in $r_2\overline \b\setminus r_1\b$.

Let us show that $\Fgot$ has the  properties stated in the lemma.

Let $q\in \s^n$ and set $\Omega:=(q+\frac{r}{2} \b)\cap \s^n$. Observe that $\Omega$ is an open subset of $\s^n$ whose closure equals $\pgot(\oB(q))=\pgot(s\oB(q))$ for any $s>0$. Note also that for every $s>0$ the tangent ball $s\oB(q)$ cuts the open cone into two components, that is, $\pgot^{-1}(\Omega)\setminus s\oB(q)$ consists of two components.

Suppose that $\gamma\colon [0,1]\to r_2 \overline \b\setminus r_1 \b$ is a path with $|\gamma(0)|=r_1$, $|\gamma(1)|=r_2$, and $\gamma([0,1])\cap|\Fgot|=\emptyset$. Write $p=\pgot(\gamma(0))$. By \eqref{eq:1st}, there are $j\in \{1,\ldots,\mgot_n\}$ and $q\in F_j$ such that 
$\big(p+(\frac12-\cgot_n)r \overline \b\big)\cap \s^n\subset (q+\frac{r}{2} \overline \b) \cap \s^n$, and so, 
\begin{equation}\label{eq:5th}
\big( p+\frac12 \big( \frac12-\cgot_n \big) r \overline \b \big) \cap \s^n\subset \Omega,
\end{equation}
where $\Omega$ is as above.

Assume for a moment that 
\begin{equation}\label{eq:6th}
{\rm diam}\big(\pgot(\gamma([0,1]))\big)<\frac1{2} \big( \frac1{2}-\cgot_n \big) r.
\end{equation}
By \eqref{eq:5th}, this implies that $\gamma([0,1])\subset \Omega$. On the other hand, $s_j\oB(q)$ is the tangent ball in $\Fgot$ contained in $s_j\b \setminus s_{j-1}\overline \b\subset r_2\b \setminus r_1\overline \b$. Since  $|\gamma(0)|=r_1$ and $|\gamma(1)|=r_2$ it follows that  $\gamma(0)$ and $\gamma(1)$ are in different components of $\Omega\setminus s_j \oB(q)$ and therefore $\gamma([0,1])$ meets $s_j \oB(q)$, a contradiction.
So \eqref{eq:6th} does not hold, which implies that 
\[
{\rm Length}(\pgot\circ \gamma)\geq {\rm diam}(\pgot(\pi([0,1]))\geq \frac1{2} \big( \frac12-\cgot_n \big) r.
\]
By \eqref{eq:4th} it follows that
\[
{\rm Length}(\pgot\circ \gamma)\geq \agot_n \frac{\sqrt{r_2-r_1}}{\sqrt{r_2}},
\]
where $\agot_n=(\frac1{2}-\cgot_n) \frac{\sqrt{2}}{\sqrt{\mgot_n+1}}$. This completes the proof.
\end{proof}

%%%
%%%
%%%

Finally, a finite recursive application of Lemma \ref{lem:r1r2} gives the following extension.

\begin{lemma}\label{lem:r1r2+}
Suppose  $0<r_1<r_2<1$ and let $\rho>0$. There exists a finite tidy collection  $\Fgot$ of tangent balls in $r_2 \b$ enjoying the following properties:
\begin{enumerate}[\rm (i)]
\item $|\Fgot|\cap r_1\overline \b=\emptyset$.
\item If $\gamma\colon [0,1]\to r_2\overline \b\setminus r_1\b $ is a path with $|\gamma(0)|=r_1$,  $|\gamma(1)|=r_2$, and $\gamma([0,1])\cap|\Fgot|=\emptyset$, then the length of $\pgot\circ \gamma$ is at least  $\rho$. In particular, the length of such $\gamma$ is at least  $r_1\rho$.
\end{enumerate}
\end{lemma}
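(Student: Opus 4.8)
The plan is to obtain $\Fgot$ by iterating Lemma~\ref{lem:r1r2} finitely many times across a partition of the annulus $r_2\overline\b\setminus r_1\b$ into thin sub-shells, so that the spherical length accumulated in each sub-shell adds up to at least $\rho$. Concretely, I would fix a large integer $m$, set $t_i:=r_1+\tfrac{i}{m}(r_2-r_1)$ for $i=0,\dots,m$, and apply Lemma~\ref{lem:r1r2} to each pair $(t_{i-1},t_i)$ to get a finite tidy collection $\Fgot_i$ of tangent balls contained in $t_i\b\setminus t_{i-1}\overline\b$ with the property that any path crossing that sub-shell radially from $t_{i-1}\s^n$ to $t_i\s^n$ and missing $|\Fgot_i|$ has $\pgot$-length at least $\agot_n\sqrt{t_i-t_{i-1}}/\sqrt{t_i}\ge \agot_n\sqrt{(r_2-r_1)/m}/\sqrt{r_2}$. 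Then I would set $\Fgot:=\bigcup_{i=1}^m\Fgot_i$.

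First I would check that $\Fgot$ is tidy. Each $\Fgot_i$ is tidy by Lemma~\ref{lem:r1r2}, and since $\Fgot_i$ lives strictly inside the shell $t_i\b\setminus t_{i-1}\overline\b$, the three conditions of Definition~\ref{def:tidy} hold for the union: only finitely many balls total (so the first condition is automatic), balls tangent to the same sphere already come from a single $\Fgot_i$ and inherit equal radii and disjointness, and any ball from $\Fgot_i$ is contained in $t_i\b$, which contains every ball from $\Fgot_{i'}$ with $i'<i$ and in particular the whole of $|\Fgot_{i'}|$; combined with tidiness of each $\Fgot_i$ this gives the nesting condition. Also $|\Fgot|\cap r_1\overline\b=|\Fgot_1|\cap t_0\overline\b\cup\cdots=\emptyset$, so (i) holds.

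Next comes the length estimate (ii). Let $\gamma\colon[0,1]\to r_2\overline\b\setminus r_1\b$ be a path with $|\gamma(0)|=r_1$, $|\gamma(1)|=r_2$, and $\gamma([0,1])\cap|\Fgot|=\emptyset$. Since $t\mapsto|\gamma(t)|$ is continuous and runs from $r_1$ to $r_2$, for each $i$ I can choose parameters $0\le a_i\le b_i\le 1$ so that $|\gamma(a_i)|=t_{i-1}$, $|\gamma(b_i)|=t_i$, and $t_{i-1}\le|\gamma(t)|\le t_i$ on $[a_i,b_i]$; these sub-intervals can be chosen with pairwise disjoint interiors. The restriction $\gamma|_{[a_i,b_i]}$ is a path in $t_i\overline\b\setminus t_{i-1}\b$ with the right radial endpoints and disjoint from $|\Fgot_i|$, so Lemma~\ref{lem:r1r2} gives $\mathrm{Length}(\pgot\circ\gamma|_{[a_i,b_i]})\ge \agot_n\sqrt{t_i-t_{i-1}}/\sqrt{t_i}\ge \agot_n\sqrt{(r_2-r_1)/m}/\sqrt{r_2}$. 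Summing over $i=1,\dots,m$ and using additivity of length over the disjoint intervals, $\mathrm{Length}(\pgot\circ\gamma)\ge m\cdot\agot_n\sqrt{(r_2-r_1)}/(\sqrt{m}\,\sqrt{r_2})=\sqrt{m}\,\agot_n\sqrt{r_2-r_1}/\sqrt{r_2}$, which exceeds $\rho$ as soon as $m>\rho^2 r_2/(\agot_n^2(r_2-r_1))$. So I fix $m$ that large at the outset. Finally, since $|\gamma(t)|\ge r_1$ throughout, $\mathrm{Length}(\gamma)\ge r_1\,\mathrm{Length}(\pgot\circ\gamma)\ge r_1\rho$, giving the "in particular" clause.

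The only delicate point is the bookkeeping in the previous paragraph: one must make sure the sub-intervals $[a_i,b_i]$ genuinely exist with disjoint interiors and in increasing order, which follows from a standard argument (let $b_i:=\sup\{t:|\gamma(t)|=t_i\}$ and $a_i:=\sup\{t<b_i:|\gamma(t)|=t_{i-1}\}$, using the intermediate value theorem and $|\gamma(0)|=r_1$, $|\gamma(1)|=r_2$). Everything else is a direct assembly of Lemma~\ref{lem:r1r2} and the tidiness definition, so I expect no real obstacle beyond this routine verification.
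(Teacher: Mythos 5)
Your proposal is correct and follows essentially the same route as the paper: partition $[r_1,r_2]$ into $m$ equal pieces with $m>\rho^2 r_2/(\agot_n^2(r_2-r_1))$, apply Lemma~\ref{lem:r1r2} to each sub-shell, take the union, and sum the per-shell bounds $\agot_n\sqrt{(r_2-r_1)/m}/\sqrt{r_2}$ to exceed $\rho$. The extra care you take with the disjoint sub-intervals and the tidiness of the union is sound and matches what the paper asserts more briefly.
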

\begin{proof}
Let $\agot_n>0$ be the number given by Lemma \ref{lem:r1r2}. Pick $N\in\n$, 
\begin{equation}\label{eq:N}
N>\frac{r_2\rho^2}{\agot_n^2(r_2-r_1)}.
\end{equation}
Set
\[
s_j:=r_1+j\frac{r_2-r_1}{N},\quad j=1,\ldots,N.
\]
Obviously, $r_1=s_0<s_1<\cdots< s_N=r_2$. For each $j$, $1\leq j\leq N$, apply Lemma \ref{lem:r1r2} with $s_{j-1}$, $s_j$ in place of $r_1$, $r_2$ to get a finite tidy  collection  $\Fgot_j$ of tangent balls in $s_j \b$ having the following properties:
\begin{enumerate}
\item[\rm (i$_j$)] $|\Fgot_j|\cap s_{j-1}\overline \b=\emptyset$.
\item[\rm (ii$_j$)] If $\gamma\colon [0,1]\to s_j\overline \b\setminus s_{j-1}\b$ is a path with $|\gamma(0)|=s_{j-1}$, $|\gamma(1)|=s_j$, and $\gamma([0,1])\cap|\Fgot_j|=\emptyset$, then the length of $\pgot \circ \gamma$ is at least  $\displaystyle \agot_n  \frac{\sqrt{s_j-s_{j-1}} }{\sqrt{s_j}}$. 
\end{enumerate}

Set $\Fgot:=\bigcup_{j=1}^N\Fgot_j$. It follows that $\Fgot$ is a tidy finite collection  of tangent balls in $r_2 \b$ satisfying {\rm (i)}. Furthermore, given a path $\gamma$ as in  {\rm (ii)}, $\gamma$ contains subpaths $\gamma_1,\ldots,\gamma_N$ with $\gamma_j\subset s_j\overline\b\setminus s_{j-1}\b$ connecting $s_{j-1}\s^n$ and $s_j\s^n$ for all $j$. Therefore,
\begin{eqnarray*}
{\rm Length}(\pgot \circ\gamma) & \geq & \sum_{j=1}^N{\rm Length}(\pgot \circ\gamma_j)
\\
& \stackrel{\text{\rm (ii$_j$)}}\geq & \agot_n \sum_{j=1}^N \frac{\sqrt{s_j-s_{j-1}} }{\sqrt{s_j}}
\\
& = & \sum_{j=1}^N \frac{\agot_n\sqrt{r_2-r_1}}{\sqrt{r_1+\frac{j}{N}(r_2-r_1)}}\frac1{\sqrt{N}}
\\
& \geq & \sum_{j=1}^N \frac{\agot_n\sqrt{r_2-r_1}}{\sqrt{r_2}}\frac1{\sqrt{N}}
\\
& = & \frac{\agot_n\sqrt{r_2-r_1}}{\sqrt{r_2}}\sqrt{N} \; \stackrel{\text{\eqref{eq:N}}}{>} \rho.
\end{eqnarray*}
This proves {\rm (ii)} and completes the proof of the lemma.
\end{proof}

We now state and prove the main result in this section, which is a more general version of Theorem \ref{th:infinity}.

\begin{theorem}\label{th:infinity2}
Let $0<s_0<s_1<\cdots<\lim_{j\to+\infty} s_j=1$ and $0=\rho_0<\rho_1<\cdots <\lim_{j\to+\infty} \rho_j=+\infty$ be sequences of real numbers. There exists a tidy collection $\Fgot$ of tangent balls in $\b$ satisfying the following properties:
\begin{enumerate}[\rm (i)]
\item $|\Fgot|\cap s_1\overline \b=\emptyset$.
\item If $\gamma\colon [0,1]\to \b$ is a path with $|\gamma(0)|=s_{j-1}$, $|\gamma(1)|=s_j$, and $\gamma([0,1])\cap|\Fgot|=\emptyset$, then the length of $\pgot\circ\gamma$ is at least  $\rho_j$. In particular, the length of such $\gamma$ is at least  $s_{j-1}\rho_j$.
\item If $\gamma\colon [0,1[\to \b$ is a path such that $\lim_{t\to1}|\gamma(t)|=1$ and whose range intersects at most finitely many balls in $\Fgot$, then $\pgot\circ \gamma$ and $\gamma$ have infinite length.
\end{enumerate}

Thus, every closed submanifold of $\b$ missing $|\Fgot|$ is complete.
\end{theorem}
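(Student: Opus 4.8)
The plan is to build $\Fgot$ as a countable union of finite ``layers'', one per spherical shell cut out by the sequence $(s_j)_{j\ge 0}$, produced by Lemma \ref{lem:r1r2+}. Explicitly, for every $j\in\n$ I would apply Lemma \ref{lem:r1r2+} with $r_1:=s_j$, $r_2:=s_{j+1}$ and $\rho:=\rho_{j+1}$ to obtain a finite tidy collection $\Fgot_j$ of tangent balls, each contained in the shell $s_{j+1}\b\setminus s_j\overline\b$, such that every path in $s_{j+1}\overline\b\setminus s_j\b$ joining $s_j\s^n$ to $s_{j+1}\s^n$ and missing $|\Fgot_j|$ has radial projection of length at least $\rho_{j+1}$; then I set $\Fgot:=\bigcup_{j\in\n}\Fgot_j$. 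Property (i) is immediate, since every $\Fgot_j$ with $j\ge 1$ lies in $\b\setminus s_j\overline\b\subseteq\b\setminus s_1\overline\b$. That $\Fgot$ is tidy I would check against Definition \ref{def:tidy}: since the layers live in pairwise disjoint, radially ordered shells, two tangent balls of $\Fgot$ lying in a common layer $\Fgot_j$ inherit the three required properties from the tidiness of $\Fgot_j$, while two balls from different layers automatically satisfy them (their centres have different norms, so the equal-norm clause is vacuous, and the ball in the inner shell is contained in the open ball bounded by the sphere supporting the outer shell); finally $t\overline\b$ meets only finitely many balls for each $0<t<1$ because each layer $\Fgot_j$ with $s_j\ge t$ is disjoint from $t\overline\b$, leaving only finitely many (finite) layers to intersect.

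For (ii), fix $j\ge 2$ and a path $\gamma\colon[0,1]\to\b$ with $|\gamma(0)|=s_{j-1}$, $|\gamma(1)|=s_j$, and $\gamma([0,1])\cap|\Fgot|=\emptyset$; the idea is to carve out of $\gamma$ a subpath confined to the shell governed by the layer $\Fgot_{j-1}$. Let $t_1$ be the largest parameter with $|\gamma(t_1)|=s_{j-1}$ and $t_2$ the smallest parameter $>t_1$ with $|\gamma(t_2)|=s_j$; a short continuity argument shows $s_{j-1}<|\gamma(t)|<s_j$ for $t_1<t<t_2$, so $\sigma:=\gamma|_{[t_1,t_2]}$ maps into $s_j\overline\b\setminus s_{j-1}\b$, joins $s_{j-1}\s^n$ to $s_j\s^n$, and still avoids $|\Fgot_{j-1}|$. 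Applying Lemma \ref{lem:r1r2+} to $\Fgot_{j-1}$ gives ${\rm Length}(\pgot\circ\sigma)\ge\rho_j$, hence ${\rm Length}(\pgot\circ\gamma)\ge\rho_j$; and since $\sigma$ stays at Euclidean distance $\ge s_{j-1}$ from the origin, the elementary bound ${\rm Length}(\sigma)\ge s_{j-1}\,{\rm Length}(\pgot\circ\sigma)$ yields ${\rm Length}(\gamma)\ge{\rm Length}(\sigma)\ge s_{j-1}\rho_j$.

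Part (iii) is this extraction performed infinitely often. If $\gamma\colon[0,1[\to\b$ satisfies $\lim_{t\to 1}|\gamma(t)|=1$ and meets only finitely many balls of $\Fgot$, then those balls lie in finitely many layers, so there is $J\in\n$ with $\gamma([0,1[)\cap|\Fgot_j|=\emptyset$ for all $j\ge J$; enlarging $J$ we may also assume $|\gamma(0)|\le s_J$. For each $k\ge J+1$ set $b_k:=\inf\{t:|\gamma(t)|=s_k\}$ and $a_k:=\sup\{t\le b_k:|\gamma(t)|\le s_{k-1}\}$; a continuity argument shows these are well defined with $a_k<b_k\le a_{k+1}$, so the subpaths $\sigma_k:=\gamma|_{[a_k,b_k]}$ are pairwise non-overlapping, each confined to $s_k\overline\b\setminus s_{k-1}\b$, joining $s_{k-1}\s^n$ to $s_k\s^n$, and missing $|\Fgot_{k-1}|$. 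By Lemma \ref{lem:r1r2+} applied to $\Fgot_{k-1}$ one gets ${\rm Length}(\pgot\circ\sigma_k)\ge\rho_k$ and ${\rm Length}(\sigma_k)\ge s_{k-1}\rho_k$, and summing over $k>J$ gives ${\rm Length}(\pgot\circ\gamma)\ge\sum_{k>J}\rho_k=+\infty$ and ${\rm Length}(\gamma)\ge\sum_{k>J}s_{k-1}\rho_k=+\infty$, since $\rho_k\to+\infty$. Finally, if $M\subset\b$ is a closed submanifold with $M\cap|\Fgot|=\emptyset$, then every divergent path in $M$ eventually leaves each compact set $M\cap t\overline\b$ $(0<t<1)$, hence its radial coordinate tends to $1$, and it meets no ball of $\Fgot$; by (iii) it has infinite Euclidean, hence infinite intrinsic, length, so $M$ with the induced metric is complete.

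The step I expect to be the main obstacle is the bookkeeping in part (iii): extracting from a single, possibly highly oscillating, path an \emph{infinite sequence of pairwise non-overlapping} subpaths, each crossing one further shell and avoiding precisely the layer that carries the relevant growing lower bound $\rho_k$. Everything else is essentially a repackaging of Lemma \ref{lem:r1r2+} together with the observation that stacking finite tidy collections across nested disjoint shells again produces a tidy collection.
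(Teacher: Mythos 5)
Your proposal is correct and follows essentially the same route as the paper: apply Lemma \ref{lem:r1r2+} shell by shell, take the union of the resulting finite tidy layers, and obtain (ii) and (iii) by extracting shell-crossing subpaths that avoid the relevant layer. You supply some details the paper elides (the verification of tidiness of the union and the explicit first/last-crossing-time extraction of non-overlapping subpaths), and your index shift placing the first layer in the shell between $s_1\s^n$ and $s_2\s^n$ is in fact what property (i) requires.
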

\begin{proof}
For each  $j\in \n$, Lemma \ref{lem:r1r2+} applied to $s_{j-1}$, $s_j$ in place of $r_1,$ $r_2$, furnishes a tidy finite collection  $\Fgot_j$ of tangent balls in $s_j \b$ satisfying that:
\begin{enumerate}[\rm (i$_j$)]
\item $|\Fgot_j|\cap s_{j-1}\overline \b=\emptyset$.
\item If $\gamma\colon [0,1]\to s_j\overline \b\setminus s_{j-1}\b $ is a path with $|\gamma(0)|=s_{j-1}$, $|\gamma(1)|=s_j$, and $\gamma([0,1])\cap|\Fgot_j|=\emptyset$, then the length of $\pgot\circ \gamma$ is at least  $\rho_j$. In particular, the length of such $\gamma$ is at least  $s_{j-1}\rho_j$.
\end{enumerate}

Set $\Fgot:=\bigcup_{j\in \n} \Fgot_j$. By {\rm (i$_j$)}, $j\in \n$, $\Fgot$ is a tidy collection of tangent balls in $\b$ satisfying {\rm (i)}, whereas {\rm (ii)} clearly follows from {\rm (ii$_j$)}, $j\in \n$.

To finish, assume that $\gamma\colon [0,1[\to \b$ is a proper path intersecting at most finitely many balls in $\Fgot$. Since $\Fgot$ is tidy in $\b$, there exists $j_0\in \n$ such that  $\gamma([0,1[)\cap |\Fgot_j|=\emptyset$ for all $j\geq j_0$. Up to enlarging $j_0$ if necessary, we infer that $\gamma$ contains a subpath $\gamma_j$ with range in $s_j\overline \b\setminus (|\Fgot_j|\cup s_{j-1} \b)$  connecting $s_{j-1} \s^n$ and $s_j\s^n$, $j\geq j_0$. 
Item  {\rm (ii)} ensures that 
\[
{\rm Length}(\pgot\circ \gamma)\geq \sum_{j\geq j_0} {\rm Length}(\pgot\circ \gamma_j)\geq \sum_{j\geq j_0}  \rho_j=+\infty
\]
and so 
\[
{\rm Length}(\gamma)\geq \sum_{j\geq j_0} {\rm Length}(\gamma_j)\geq  \sum_{j\geq j_0} s_{j-1} \rho_j\geq s_{j_0-1} \sum_{j\geq j_0} \rho_j=+\infty.
\]
This proves {\rm (iii)} and completes the proof.
\end{proof}

%%%%%%%%%%
%%%%%%%%%%
%%%%%%%%%%
%%%%%%%%%%  Second theorem
%%%%%%%%%%
%%%%%%%%%%

\section{The Avoiding Obstacles Theorem}\label{sec:disc}

This section is devoted to the proof of Theorem \ref{th:disc}. Throughout the section we fix $N\in\n$ and write $\bc$  for the unit ball  $\bc_{N+1}\subset \c^{N+1}$. 
Numbers in $\c$ will be denoted by roman letters, typically $z$ and $w$, whereas elements of $\c^N$ or $\c^{N+1}$ will be denoted by greek letters such as $\zeta$ and $\xi$.

As a preliminary step to the proof of Theorem \ref{th:disc} we shall prove the following.

\begin{lemma}\label{lem:shear1}
Let $D$ and $E$ be compact sets in $\c^{N+1}$ and assume that $E$ is convex and $D$ is contained in an affine real hyperplane $L\subset\c^{N+1}$ which does not intersect $E$. Let $Z\subset \c^{N+1}$ be a 
%smooth 
closed complex hypersurface. Given $\epsilon>0$ there exists a holomorphic automorphism  $\Phi\colon\c^{N+1}\to\c^{N+1}$ 
such that:
\begin{enumerate}[\rm (i)]
\item $\Phi(Z)\cap D=\emptyset$.
\item $|\Phi(\zeta)-\zeta|<\epsilon$ for all $\zeta\in E$.
\end{enumerate}
\end{lemma}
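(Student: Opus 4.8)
The plan is to normalise the data by a complex‑affine automorphism and then take $\Phi$ to be a composition of three very simple automor
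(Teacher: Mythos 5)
Your proposal breaks off after the first sentence, so there is no argument to evaluate: you announce a normalisation by a complex-affine automorphism followed by ``a composition of three very simple automor[phisms]'' and then stop. Everything that actually constitutes the proof is missing. The opening gesture is consistent with the paper's approach (the paper also begins by an affine change of coordinates putting $L=\{\Re z_1=0\}$ and $E$ in $\{\Re z_1<0\}$), but the substance of the paper's argument lies in what comes next, none of which appears in your fragment: one must (1) fatten $D$ into a product $D'\times(\lambda\bc_N)$ with $D'$ a compact segment in the line $\pi_1(L)$, and perturb $Z$ slightly so that a tube $D'\times(\eta\bc_N)$ misses $Z$; (2) use Runge approximation on the polynomially convex set $D'\cup\pi_1(E)$ to produce a holomorphic $\psi\colon\c\to\c$ that is small on $\pi_1(E)$ and has very large real part on $D'$; and (3) take the shear-type automorphism $\Phi(z,\xi)=(z,e^{\psi(z)}\xi)$, checking that it is near the identity on $E$ while expanding the small tube over $D'$ enough to swallow $D$, so that $\Phi(Z)$ misses $D$. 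Without at least specifying the ``three very simple automorphisms'' and how their composition achieves both (i) and (ii) simultaneously, the proposal cannot be credited; as it stands, the entire construction and both verifications are absent.
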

The authors wish to thank the referee for pointing out how to prove this lemma which replaces a similar one proved in a previous version of the paper under the extra assumption that the hypersurface $Z$ is algebraic.
\begin{proof}
Let $\pi_1\colon \c^{N+1}\to\c$ denote the orthogonal projection into the first component given by $\pi_1(z_1,\ldots,z_{N+1})=z_1$.
By an affine complex change of coordinates we may assume that 
\[
    L=\{(z_1,\ldots,z_{N+1})\in\c^{N+1}\colon \Re(z_1)=0\},
\]
where $\Re(\cdot)$ means real part, and $E$ is contained in the half space $\{\Re(z_1)<0\}$. 
Since $D\subset L$ is compact, there exist a compact segment $D'$ in the real line $\{z\in\c\colon \Re(z)=0\}=\pi_1(L)$ and a real number $\lambda>0$ such that
\begin{equation}\label{eq:D'd}
      D\subset D'\times (\lambda\bc_N)\subset\c\times\c^N.
\end{equation}
Further, by dimension reasons we may assume that $Z\cap (D'\times\{0\})=\emptyset$ (here $0\in\c^N$; this can be achieved, for example, by an arbitrarily small translation of the hypersurface $Z$). Thus, since $Z$ is closed and $D'$ is compact, there is $\eta>0$ such that
\begin{equation}\label{eq:D'eta}
      Z\cap( D'\times (\eta\bc_N))=\emptyset.
\end{equation}
On the other hand, $E':=\pi_1(E)\subset \c$ is a compact convex subset of $\{z\in\c\colon \Re(z)<0\}$. 

Pick a small $\tau>0$, which will be specified later, and choose a holomorphic function $\psi\colon\c\to\c$ such that 
\begin{equation}\label{eq:<}
|\psi(z)|<\tau\quad \text{for all $z\in E'=\pi_1(E)$}
\end{equation}
and 
\begin{equation}\label{eq:>}
\Re(\psi(z))>1/\tau\quad \text{for all $z\in D'\supset \pi_1(D)$}.
\end{equation}
Such exists by the classical Runge approximation theorem; observe that $D'$ and $E'$ are connected, simply-connected, disjoint compact subsets of $\c$, and hence $D'\cup E'$ is Runge in $\c$. We claim that, if $\tau>0$ is chosen small enough, then the holomorphic automorphism $\Phi\colon\c^{N+1}=\c\times\c^N\to\c^{N+1}=\c\times\c^N$ given by
\[
    \Phi(z,\xi)=(z, e^{\psi(z)}\xi),\quad z\in\c,\; \xi\in\c^N,
\]
satisfies the conclusion of the lemma. Indeed, for $(z,\xi)\in E$ we have 
\[
   |\Phi(z,\xi)-(z,\xi)|=|(1-e^{\psi(z)})\xi|\leq |1-e^{\psi(z)}|\max\{|\zeta|\colon \zeta\in E\}<\epsilon,  
\]
where the last inequality is ensured by \eqref{eq:<} provided that $\tau>0$ is sufficiently small; recall that $E$ is compact. This implies {\rm (ii)}. On the other hand, for $z\in D'$ we have
\[
    \Phi(\{z\}\times (\eta\bc_N))=\{z\}\times (e^{\psi(z)}\eta\bc_N)\supset \{z\}\times (\lambda\bc_N),
\]
where the last inclusion is guaranteed by \eqref{eq:>} provided that $\tau>0$ is sufficiently small. Thus, in view of \eqref{eq:D'd},
\[
    D\subset D'\times (\lambda\bc_N)\subset \Phi(D'\times (\eta\bc_N)).
\]
Since \eqref{eq:D'eta} ensures that $\Phi(Z)\cap \Phi(D'\times (\eta\bc_N))=\emptyset$, the above inclusion guarantees {\rm (i)}. This concludes the proof. 
\end{proof}

We now prove Theorem \ref{th:disc} by a recursive application of Lemma \ref{lem:shear1}.

\begin{proof}[Proof of Theorem \ref{th:disc}]
Let $\Fgot$ be a tidy collection of tangent balls in the unit ball $\bc\subset\c^{N+1}$ (see Def.\ \ref{def:balls} and \ref{def:tidy}) and fix numbers $\epsilon>0$ and $\lambda_0>0$ such that $|\Fgot|\cap \lambda_0 \overline\bc=\emptyset$. Let $Z\subset\c^{N+1}$ be a  closed complex hypersurface such that $Z\cap\lambda_0\overline\bc\neq\emptyset$.

 Pick $\lambda_0<\lambda_1<1$ such that $|\Fgot|\cap \lambda_1 \overline\bc=\emptyset$.
With no loss of generality we may assume that the collection $\Fgot$ is infinite; otherwise we simply replace $\Fgot$ by any infinite tidy collection in $\bc$ containing it and disjoint from $\lambda_1\overline \bc$. Set $T_0=\emptyset$. Since $\Fgot$ is tidy, it is clear that we may write $\Fgot=\{T_j\colon j\in \n\}$ so that  there exists an exhaustion of $\bc$ by smoothly bounded, strictly convex domains
\[
E_{0}:=\lambda_0 \bc \Subset E_1:= \lambda_1 \bc \Subset E_2 \Subset \ldots \Subset \bigcup_{j\in \z_+} E_j=\bc,
\]
 such that 
\begin{equation}\label{eq:Ej}
\bigcup_{i=0}^{j-1} T_i\subset E_j\quad  \text{and}\quad \overline E_j\cap (\bigcup_{i> j-1} T_i)=\emptyset,\quad j\in \n.
\end{equation} 
Recall the short discussion which follows Theorem \ref{th:disc} in the introduction.

Set $\epsilon_{0}=\epsilon$ and $\Psi_0={\rm Id}\colon \c^{N+1}\to \c^{N+1}$. We shall inductively use Lemma \ref{lem:shear1} to find a sequence $\{\Phi_j\}_{j\in\n}$ of holomorphic automorphisms of $\c^{N+1}$ and a sequence $\{\epsilon_j\}_{j\in\n}$ such that if
\[
\Psi_j:=\Phi_j\circ\cdots\circ \Phi_1,\quad j\in \n,
\]
 then for each $j\in\n$ the following conditions hold:
\begin{enumerate}
\item[\rm (a$_j$)] $\epsilon_j<\epsilon_{j-1}/2$.
\item[\rm (b$_j$)] $\epsilon_j<\dist(\overline E_{j-1},\c^{N+1}\setminus E_j)$.
\item[\rm (c$_j$)] $\epsilon_j<\dist(\overline E_{j},\c^{N+1}\setminus E_{j+1})$.
\item[\rm (d$_j$)] $|\Phi_j(\zeta)  -\zeta|<\epsilon_j$ for all $\zeta\in \overline E_{j}$.
\item[\rm (e$_j$)] $\Psi_j(Z)\cap (\bigcup_{i=0}^{j} T_i)=\emptyset$, and there is an open neighborhood $U_j\subset E_{j}$ of $\bigcup_{i=0}^{j-1} T_i$ (see \eqref{eq:Ej}) such that if $\Theta$ is an automorphism of $\c^{N+1}$ such that $|\Theta(\zeta)-\zeta|<2\epsilon_j$ for all $\zeta\in\overline E_{j}$, then $\Theta(\Psi_{j-1}(Z))\cap U_j=\emptyset$.
\end{enumerate}

Assume for a moment that we have sequences $\{\Phi_j\}_{j\in\n}$ and $\{\epsilon_j\}_{j\in\n}$  satisfying {\rm (a$_j$)}--{\rm (d$_j$)} above, $j\in \n$. By {\rm (b$_j$)} and  {\rm (d$_j$)}, we have that
\begin{equation}\label{eq:inclus0}
\Phi_j(\overline E_{j-1})\subset E_j,\quad j\in \n.
\end{equation}
If we write $L_j=\Psi_j^{-1}(\overline E_j)$, $j\in\n$, we infer from \eqref{eq:inclus0} that  
\begin{equation}\label{eq:eles}
\overline{E}_{0}=\lambda_0 \overline \bc \Subset  L_j\Subset L_{j+1} \quad \text{for all}\quad  j\in\n; 
\end{equation}
for the former inclusion take into account that $\Psi_1=\Phi_1$.
Thus, since {\rm (a$_j$)} ensures that $\sum_{j\in \n} \epsilon_j<+\infty$, \eqref{eq:eles} and properties {\rm (b$_j$)} imply that $\lim_{j\to\infty}\Psi_j=\Psi$ exists uniformly on compacta in 
\[
 \Omega:=\bigcup_{j\in\n} L_j\subset \c^{N+1}
\] and $\Psi$ is a biholomorphic map from $\Omega$ onto $\bigcup_{j\in\n}\overline E_j=\bc$; see \cite[Proposition 4.4.1]{Franc-Book}.

On the other hand, {\rm (c$_j$)} and {\rm (d$_j$)} ensure that
\begin{equation}\label{eq:inclus}
\Phi_j(\overline E_{j})\subset E_{j+1}, \quad j\in \n,
\end{equation}
and so $\Psi_j(\overline E_1)\subset  E_{j+1}$ for all $j\in \n$. Thus, for $\zeta\in  \overline E_1$, {\rm (d$_k$)} and  {\rm (a$_k$)}, $k\leq j$, give
\begin{eqnarray*}
|\Psi_j(\zeta)-\zeta| & \leq & |\Psi_1(\zeta)-\zeta|+  \sum_{k=1}^{j-1} |\Psi_{k+1}(\zeta)-\Psi_{k}(\zeta)|
\\
 & = & |\Phi_1(\zeta)-\zeta|+  \sum_{k=1}^{j-1} |\Phi_{k+1}(\Psi_{k}(\zeta))-\Psi_{k}(\zeta)| \; \leq \; \sum_{k=1}^j \epsilon_j \; < \; 2\epsilon_1.
\end{eqnarray*} 
Since $\lambda_0\overline\bc=\overline E_{0} \Subset \Omega \cap  E_1$ (see \eqref{eq:eles} and the definition of $\Omega$), passing to the limit and taking into account {\rm (a$_1$)} 
we get that $|\Psi(\zeta)-\zeta|\leq2\epsilon_1<\epsilon_{0}=\epsilon$ for all $\zeta\in \lambda_0\overline\bc$. This proves {\rm (i)} in the statement of the theorem.

Assume now that, in addition, the sequences $\{\Phi_j\}_{j\in\n}$ and $\{\epsilon_j\}_{j\in\n}$ satisfy {\rm (e$_j$)}, $j\in\n$. We show that this implies that $\Psi(Z\cap\Omega)$ misses $|\Fgot|$, which proves {\rm (ii)}. Indeed, for each $j\in\n$ and  $k\geq j$ call $\Theta_{j,k}:=\Phi_{k}\circ\cdots\circ\Phi_{j}$. Then $\Theta_{j,k}$ is an automorphism of $\c^{N+1}$. 
By \eqref{eq:inclus},  $\Theta_{j,k}(\overline E_{j})\subset  E_{k+1}$ for all $k\geq  j$. Thus, for each $\zeta \in \overline E_{j}$, {\rm (d$_{i}$)} and {\rm (a$_i$)}, $j\leq  i\leq k$, give 
\begin{eqnarray*}
|\Theta_{j,k}(\zeta)-\zeta| & \leq & |\Theta_{j,j}(\zeta)-\zeta|+ \sum_{i=j+1}^{k} |\Theta_{j,i}(\zeta)-\Theta_{j,i-1}(\zeta)|
\\
 & = & |\Phi_j(\zeta)-\zeta|+ \sum_{i=j+1}^k |\Phi_i(\Theta_{j,i-1}(\zeta))-\Theta_{j,i-1}(\zeta)|
\; \leq \; \sum_{i=j}^k \epsilon_i \; < \; 2\epsilon_j.
\end{eqnarray*}
Therefore, {\rm (e$_j$)} guarantees that $\Theta_{j,k} (\Psi_{j-1}(Z))\cap U_j= \Psi_k(Z)\cap U_j=\emptyset$ for every $k\geq j$, and so $\Psi(Z\cap\Omega)\cap U_j=\emptyset$. Since this holds for every $j\in\n$ and $|\Fgot|\subset \bigcup_{j\in \n} U_j$,   it follows that $\Psi(Z\cap\Omega)\cap|\Fgot|=\emptyset$ as claimed.

It remains to show that there are sequences $\{\Phi_j\}_{j\in\n}$  
and $\{\epsilon_j\}_{j\in\n}$,  
and for each $j\in \n$ an open neighborhood $U_j\subset E_{j}$ of $\bigcup_{i=0}^{j-1} T_i$ (see \eqref{eq:Ej}), such that if $\Psi_j=\Phi_j\circ\Phi_{j-1}\circ\cdots\circ \Phi_1$ then {\rm (a$_j$)}--{\rm (e$_j$)} hold for all $j\in\n$. We proceed by induction. 

For the basis of the induction we choose any number $\epsilon_1>0$ satisfying {\rm (a$_1$)}, {\rm (b$_1$)}, and {\rm (c$_1$)}. By Lemma \ref{lem:shear1} there is a holomorphic automorphism  $\Phi_1$ of $\c^{N+1}$ such that $|\Phi_1(\zeta)-\zeta|<\epsilon_1$ on $\overline E_1$ and $\Phi_1(Z)\cap T_1=\emptyset$. Set $U_1=\emptyset$.  Conditions  {\rm (d$_1$)} and {\rm (e$_1$)} are clear; recall that $\Psi_1=\Phi_1$ and $T_0=\emptyset$.

For the induction step, let $j\geq 2$, assume that we have $\Phi_{j-1}$, $\epsilon_{j-1}$, and $U_{j-1}$ satisfying {\rm (a$_{j-1}$)}--{\rm (e$_{j-1}$)}, and let us provide $\Phi_j$, $\epsilon_j$, and $U_j$ meeting {\rm (a$_j$)}--{\rm (e$_j$)}. 
Fix a number $\epsilon_j>0$ which will be specified later. Assume that $\epsilon_j$ satisfies {\rm (a$_j$)}, {\rm (b$_j$)}, and {\rm (c$_j$)}. 
Thus, Lemma \ref{lem:shear1} furnishes a holomorphic automorphism $\Phi_j$ of $\c^{N+1}$ satisfying {\rm (d$_j$)} and such that $\Psi_j(Z)=\Phi_j(\Psi_{j-1}(Z))$ misses $T_{j}$. Furthermore, since {\rm (e$_{j-1}$)} and \eqref{eq:Ej} ensure that $\Psi_{j-1}(Z)$ misses $\bigcup_{i=0}^{j-1} T_i =E_j\cap|\Fgot|$, we may guarantee that $\Psi_j(Z)$ misses $\bigcup_{i=0}^{j} T_i$ provided that $\epsilon_j$ is chosen small enough. This proves the former assertion in {\rm (e$_j$)}. For the latter one, recall that  $\Psi_{j-1}(Z)\cap(\bigcup_{i=0}^{j-1}T_i)=\emptyset$ and choose an open neighborhood $U_j$ of $\bigcup_{i=0}^{j-1}T_i$ whose closure is a compact set contained in $E_{j}$ disjoint from $\Psi_{j-1}(Z)\cap E_{j}$. 
Take $\eta>0$ such that $U_j\subset \{\zeta \in E_{j}\colon \dist(\zeta, \c^{N+1}\setminus E_{j})\geq \eta\}$ and such that $(\Psi_{j-1}(Z)\cap \overline E_{j})+\eta \bc$ is disjoint from $U_j$. It then follows that if $\Theta$ is an automorphism of $\c^{N+1}$ such that $|\Theta(\zeta)-\zeta|<\eta$ for all $\zeta\in \overline E_{j}$ then $\Theta(\Psi_{j-1}(Z))\cap U_j=\emptyset$. Thus, {\rm (e$_j$)} is fully satisfied provided that we choose $\epsilon_j<\eta/2$. This closes the induction and completes the proof of the existence of $\Omega$ and $\Psi$ satisfying {\rm (i)} and {\rm (ii)} in Theorem \ref{th:disc}.

The domain $\Omega$ is biholomorphically equivalent to $\bc$ and hence it is pseudoconvex. Since $\overline E_j$ is a compact convex set in $\c^{N+1}$ it is polynomially convex and hence every holomorphic function in a neighborhood of $\overline E_j$ can be, uniformly on $\overline E_j$, approximated by polynomials. Since $\Psi_j$ is an automorphism of $\c^{N+1}$ it follows that every holomorphic function in a neighborhood of $L_j=\Psi_j^{-1}(\overline E_j)$ can be, uniformly on  $L_j$, approximated by entire functions for all $j\in \n$. It follows that $\Omega=\bigcup_{j\in \n} L_j\subset\c^{N+1}$ is a Runge domain. (See \cite{Franc-Book}.)

Notice that $\Omega\cap Z\supset Z\cap\lambda_0\overline\bc\neq\emptyset$ and pick a connected component $D\subset \Omega \cap Z$. Then $D$ is a closed submanifold of $\Omega$. Since $\Omega$ is pseudoconvex it follows that 
$D$ is a pseudoconvex domain in $Z$, and, moreover, by Cartan's extension theorem every holomorphic function $\varphi$ on $D$ extends holomorphically to a holomorphic function $\widetilde\varphi$ on $\Omega$ and, since $\Omega$ is Runge in $\c^{N+1}$, $\widetilde\varphi$ can be, uniformly on compacta in $\Omega$, approximated by entire functions on $\c^{N+1}$. Therefore, $\varphi=\widetilde\varphi|_D$ can be, uniformly on compacta in $D$, approximated by restrictions of entire functions on $\c^{N+1}$ to $Z$. Thus, every holomorphic function on $D$ can be, uniformly on compacta in $D$, approximated by holomorphic functions on $Z$, so $D$ is a Runge domain in $Z$. This completes the proof.
\end{proof}

%%%%%%%%%%
%%%%%%%%%%
%%%%%%%%%%
%%%%%%%%%% MAIN RESULTS
%%%%%%%%%%
%%%%%%%%%%

\section{Proof of the main results}\label{sec:main}

In this final section we make use of Theorems \ref{th:infinity} and \ref{th:disc} in order to prove Theorem \ref{th:main} and Corollary \ref{co:main2}.

\begin{proof}[Proof of Theorem \ref{th:main}]
Let $N\in\n$, let $Z$ be a 
closed complex hypersurface in $\c^{N+1}$ such that $Z\cap\bc_{N+1}\neq\emptyset$, let $K\subset Z\cap \bc_{N+1}$ be a connected compact subset, and let $\epsilon>0$. 

Choose $0<\lambda_0<1$ such that $K\subset \lambda_0 \bc_{N+1}$, let $\Fgot$ be a tidy collection of tangent balls in $\bc_{N+1}$ given by Theorem \ref{th:infinity}, satisfying $|\Fgot|\cap \lambda_0 \overline \bc_{N+1}=\emptyset$.

Apply Theorem \ref{th:disc} to $\Fgot$, $Z$,  $\lambda_0$, and $\epsilon$, and consider the arising Runge pseudoconvex domain $\Omega$ of $\c^{N+1}$, which contains $\lambda_0\overline\bc_{N+1}$, and biholomorphism $\Psi\colon \Omega\to \bc_{N+1}$. Let $D\subset \Omega \cap Z$ be the connected component containing $K$, which ensures {\rm (i)}, and consider the proper holomorphic embedding $\psi:=\Psi|_D\colon D\to \bc_{N+1}$. Item  {\rm (ii)} in Theorem \ref{th:main} follows straightforwardly. To get {\rm (iii)}, observe that the completeness of  $\psi$ is a direct consequence of the choice of $\Fgot$ (see the last sentence in Theorem \ref{th:infinity}), and take into account Theorem \ref{th:disc}-{\rm (i)} and that $K\subset  \lambda_0\overline \bc_{N+1}$. 
\end{proof}

Before proving Corollary \ref{co:main2}, recall that  every open Riemann surface $\Rcal$ is Stein (see Behnke-Stein \cite{BS}), whereas a domain $D\subset\Rcal$ is a Runge domain in $\Rcal$ if and only if $\Rcal\setminus D$ contains no relatively compact connected components; in particular a domain $D\subset \c$ is Runge if and only if it is  simply connected. 

\begin{proof}[Proof of Corollary \ref{co:main2}]
The first part of the corollary trivially follows from Theorem \ref{th:main}.

For the second part, recall first that there are properly embedded complex curves in $\c^2$ with arbitrary topology (see Alarc\'on and L\'opez \cite{AL-Nar}; the case of finite topology, which is required in our proof, is due to \v Cerne and Forstneri\v c \cite{CF}). Let $Z$ be  a connected finite topology properly embedded complex curve in $\c^2$ and assume, up to applying a homothetic transformation to $Z$ if necessary, that all the topology of $Z$ is contained in $\frac12\bc_2$. This means that $Z$ intersects the boundary $\frac12 \s^3$ of $\frac12 \bc_2$ transversely and so that $Z\setminus \frac12 \overline \bc_2$ consists of finitely many  open annuli $A_1,\ldots, A_m$ (here $m$ denotes the number of topological ends of $Z$) with pairwise disjoint closures, properly embedded in $\c^2\setminus \frac12 \overline \bc_2$, such that the boundary $b A_i$  of $A_i$ in $Z$  is a smooth Jordan curve in $\frac12 \s^3$ for all $i=1,\ldots,m$. It follows that $Z\cap \frac12 \bc_2$ is homeomorphic to $Z$ and $K:=Z\cap \frac12 \overline \bc_2$ is a Runge compact connected subset of $Z$. 
%Choose a compact connected subset $K\subset Z\cap\bc_2$ containing $Z\cap\frac12\overline\bc_2$ such that $Z\cap\frac12\overline\bc_2$ is a strong deformation retract of $K$. 
Thus, the first part of the corollary provides a Runge domain $D\subset Z$, containing $K$, and a complete proper holomorphic embedding  $\psi\colon D\to\bc_2$. Since $Z\setminus D$ has no relatively compact connected components in $Z$ and $D$ contains $K= Z\cap\frac12\overline\bc_2$, it follows that $D\setminus \frac12 \overline \bc_2$ consists of  finitely many  open annuli $A_1',\ldots, A_m'$ such that $A_i'\subset A_i$ and $bA_i= (bA_i')\cap \frac12 \s^3$ for all $i=1,\ldots,m$. This guarantees that $D$ is homeomorphic to $Z\cap \frac12 \bc_2$, hence to $Z$, which concludes the proof.
\end{proof}

We finish the paper with the following
\begin{question}
We have proved that the unit disc $\d\subset\c$ properly embeds into the unit ball $\bc_2\subset\c^2$ as a complete complex curve, so it is natural to ask whether, given $N\geq 2$, there exists a complete proper holomorphic embedding $\bc_N\hookrightarrow \bc_{N+1}$ (cf.\ \cite{Glob1}).
A less difficult question would be whether there is a complete closed complex hypersurface in $\bc_{N+1}$ which is homeomorphic to $\bc_N$.
\end{question}

%%%%%%%%%%
%%%%%%%%%%
%%%%%%%%%%
%%%%%%%%%%  ACKNOWLEDGEMENTS
%%%%%%%%%%
%%%%%%%%%%

\subsection*{Acknowledgements}
The authors wish to thank Jean-Pierre Demailly for suggesting the use of tangent balls instead of faces of convex polytopes. They are also grateful to the referee for very useful comments that led to improvement of the original manuscript.

A. Alarc\'on is supported by the Ram\'on y Cajal program of the Spanish Ministry of Economy and Competitiveness.

A.\ Alarc\'{o}n and F.\ J.\ L\'opez are partially supported by the MINECO/FEDER grant no. MTM2014-52368-P, Spain.

J.\ Globevnik is partially  supported  by the research program P1-0291 and the grant J1-5432 from ARRS, Republic of Slovenia.

%%%%%%%%%%
%%%%%%%%%%
%%%%%%%%%%
%%%%%%%%%%   THE BIBLIOGRAPHY
%%%%%%%%%%
%%%%%%%%%%

%
%
%

\vskip 0.4cm

\noindent Antonio Alarc\'{o}n

\noindent Departamento de Geometr\'{\i}a y Topolog\'{\i}a e Instituto de Matem\'aticas (IEMath-GR), Universidad de Granada, Campus de Fuentenueva s/n, E--18071 Granada, Spain.

\noindent  e-mail: {\tt alarcon@ugr.es}

\vspace*{0.3cm}

\noindent Josip Globevnik

\noindent Department of Mathematics, University of Ljubljana, and Institute
of Mathematics, Physics and Mechanics, Jadranska 19, SI--1000 Ljubljana, Slovenia.

\noindent e-mail: {\tt josip.globevnik@fmf.uni-lj.si}

\vspace*{0.3cm}

\noindent Francisco J.\ L\'opez

\noindent Departamento de Geometr\'{\i}a y Topolog\'{\i}a e Instituto de Matem\'aticas (IEMath-GR), Universidad de Granada, Campus de Fuentenueva s/n, E--18071 Granada, Spain.

\noindent  e-mail: {\tt fjlopez@ugr.es}


\begin{thebibliography}{12}

\bibitem{AF-1}
Alarc\'on, A.; Forstneri\v c, F.: 
Every bordered Riemann surface is a complete proper curve in a ball.
Math.\ Ann. {\bf 357} (2013), 1049--1070

\bibitem{AF-2}
Alarc\'on, A.;  Forstneri\v c, F.: 
Null curves and directed immersions of open Riemann surfaces. 
Invent. Math. \textbf{196}  (2014), 733--771

\bibitem{AF-3}
Alarc\'on, A.;  Forstneri\v c, F.: 
The Calabi-Yau problem, null curves, and Bryant surfaces. 
Math.\ Ann. {\bf  363} (2015), 913--951

\bibitem{AL-CY}
Alarc\'on, A.; L\'opez, F.\ J.: 
Null curves in $\c^3$ and Calabi-Yau conjectures. 
Math.\ Ann. \textbf{355} (2013), 429--455

\bibitem{AL-Nar}
Alarc\'on, A.; L\'opez, F.\ J.: 
Proper holomorphic embeddings of Riemann surfaces with arbitrary topology into $\mathbb{C}^2$.
J.\ Geom.\ Anal. \textbf{23} (2013), 1794--1805

\bibitem{AL-JEMS}
Alarc\'on, A.; L\'opez, F.\ J.: 
Complete bounded embedded complex curves in $\c^2$.
J.\ Eur.\ Math.\ Soc. (JEMS) \textbf{18} (2016), 1675--1705

\bibitem{BS}
Behnke, H.; Stein, K.: 
Entwicklung analytischer Funktionen auf Riemannschen Flächen.
Math.\ Ann.\ \textbf{120} (1949), 430--461

\bibitem{CF}
\v Cerne, M.; Forstneri\v c, F.: 
Embedding some bordered Riemann surfaces in the affine plane. 
Math.\ Res.\ Lett. \textbf{9} (2002), 683--696

\bibitem{D}
Drinovec Drnov\v sek, B.:
Complete proper holomorphic embeddings of strictly pseudoconvex domains into balls.
J.\ Math.\ Anal.\ Appl. {\bf 431} (2015), 705--713

\bibitem{Franc-Book} 
Forstneri\v c, F.:  
Stein Manifolds and Holomorphic Mappings (The Homotopy Principle in Complex Analysis). 
Ergebnisse der Mathematik und ihrer Grenzgebiete, 3.\ Folge, 56. 
Springer-Verlag, Berlin-Heidelberg (2011)

\bibitem{Glob1}
Globevnik, J.:
A complete complex hypersurface in the ball of $\mathbb{C}^N$. 
 Ann.\ of Math. (2) {\bf 182} (2015), 1067--1091

\bibitem{Glob2}
Globevnik, J.:
Holomorphic functions unbounded on curves of finite length. 
Math.\ Ann. \textbf{364} (2016), 1343--1359

\bibitem{Jones}
Jones, P.W.:
 A complete bounded complex submanifold of $\c^3$. 
 Proc.\ Amer.\ Math.\ Soc. {\bf 76} (1979), 305--306

\bibitem{Yang1}
Yang, P.: 
Curvature of complex submanifolds of $\c^n$.
In Several complex variables (Proc.\ Sympos.\ Pure Math., Vol. XXX, Part 2,
Williams Coll., Williamstown, Mass., 1975), Amer. Math. Soc., Providence, R.I., 1977, pp.~135--137.

\bibitem{Yang2}
Yang, P.:  
Curvatures of complex submanifolds of $\c^n$. 
J. Differential Geom. {\bf 12} (1977), 499--511 %(1978)

\end{thebibliography}
\end{document}